\numberwithin{equation}{section}
\numberwithin{algorithm}{section}
\theoremstyle{plain}
\newtheorem{theorem}{Theorem}[section]
\newtheorem{proposition}[theorem]{Proposition}
\newtheorem{lemma}[theorem]{Lemma}
\newtheorem{corollary}[theorem]{Corollary}
\theoremstyle{definition}
\newtheorem{definition}[theorem]{Definition}
\theoremstyle{remark}
\let\H\undefined
\DeclareMathOperator{\tr}{Tr}
\DeclareMathOperator{\diag}{diag}
\DeclareMathOperator{\rank}{rank}
\newcommand{\N}{\mathbb{N}}
\newcommand{\C}{\mathbb{C}}
\newcommand{\F}{\mathbb{F}}
\newcommand{\H}{\mathbb{H}}
\newcommand{\Q}{\mathbb{Q}}
\newcommand{\R}{\mathbb{R}}
\newcommand{\x}{\mathbf{x}}
\newcommand{\y}{\mathbf{y}}
\newcommand{\z}{\mathbf{z}}
\newcommand{\U}{\mathbf{U}}
\newcommand{\w}{\mathbf{w}}
\newcommand{\W}{\mathbf{W}}
\newcommand{\cA}{\mathcal{A}}
\newcommand{\cE}{\mathcal{E}}
\newcommand{\cF}{\mathcal{F}}
\newcommand{\bbf}{\mathbf{f}}
\newcommand{\bH}{\mathbf{H}}
\newcommand{\bi}{\mathbf{i}}
\newcommand{\bj}{\mathbf{j}}
\newcommand{\bk}{\mathbf{k}}
\newcommand{\bq}{\mathbf{q}}
\newcommand{\bs}{\mathbf{s}}
\newcommand{\bt}{\mathbf{t}}
\newcommand{\bu}{\mathbf{u}}
\newcommand{\bv}{\mathbf{v}}
\newcommand{\bw}{\mathbf{w}}
\newcommand{\rA}{\mathrm{A}}
\newcommand{\rB}{\mathrm{B}}
\newcommand{\rH}{\mathrm{H}}
\newcommand{\rK}{\mathrm{K}}
\newcommand{\rL}{\mathrm{L}}
\newcommand{\rP}{\mathrm{P}}
\newcommand{\rS}{\mathrm{S}}
\newcommand{\rU}{\mathrm{U}}
\newcommand{\0}{\mathbf{0}}
\begin{document}
\title[Characterizations of the numerical radius and its dual norm]
{On semidefinite programming characterizations of the numerical radius and its dual norm for quaternionic matrices}
\author{Shmuel Friedland}
\address{
 Department of Mathematics, Statistics and Computer Science,
 University of Illinois at Chicago, Chicago, Illinois 60607-7045,
 USA, \texttt{friedlan@uic.edu}
 }
 
\subjclass[2010]
  	{15A60,15A69,15B33, 68Q25,68W25, 90C22,90C51}
  	
\keywords{
Quaternion numerical radius, dual of quaternion numerical radius, pseudo-numerical range, semidefinite programming,  polynomial time approximation.}
\begin{abstract}  
We give a semidefinite programming characterizations of the  numerical radius and its dual norm for quaternionic matrices.  We show that the computation of the numerical radius and its dual norm within $\varepsilon$ precision are polynomially time computable in the data and $|\log \varepsilon |$ using  the short step, primal interior point method.
\end{abstract}
\maketitle

\keywords{}
\section{Introduction}\label{sec:intro}
Let $\F$ be either the field real numbers $\R$,  the field of complex numbers, $\C$ or the skew-field of quaternions $\H$:
\begin{equation}\label{deffieldF}
\F\in\{\R,\C,\H\}
\end{equation}
For a positive integer $n$ denote  by $[n]$ the set $\{1,\ldots,n\}$.
For $A=[a_{ij}]\in\F^{m\times n}$ let $A^*=[b_{pq}]\in \F^{n\times m},b_{pq}=\bar a_{qp}, p\in[n],q\in[m]$ be the adjoint matrix.  Identify $\F^n$ with $\F^{n\times 1}$.
 Denote by $\rH_n(\F)$ the real space of selfadjoint matrices $\{A\in\F^{n\times n}, A^*=A\}$.   A selfadjoint matrix $A$ is positive semidefinite (positive definite) if $\x^* A \x\ge 0\, (\x^* A\x >0)$ for $\x\ne \0$,  denoted as $A\succeq 0\,(A\succ 0)$.  We denote by $\rH_{n,+}(\H)$ and $\rH_{n,++}(\H)$ the cone of positive semidefinite matrices and its interior in $\rH_n(\H)$ respectively. 
For $\x\in\F^n$ set $\|\x\|=\sqrt{\x^*\x}$.
Let
\begin{equation}\label{defWAF}
\begin{aligned}
\W(A)=\{\x^* A\x:\x\in \F^n, \|\x\|=1\}, \\
r(A)=\{\max|\x^* A\x|: \x\in \F^n, \|\x\|=1\},
\end{aligned}
\end{equation}
be the numerical range  and the numerical radius of $A\in\F^{n\times n}$ respectively.  For $\F=\R$ the numerical range $\W(A)$ is an interval.  For $\F=\C$ the classical result of Hausdorff-T\"oplitz states that $\W(A)$ is a compact convex set in $\C$.  
Recall the semidefinite programing (SDP) characterization of $r(A)$ stated in \cite[Theorem 1.2]{LO20}, which is essentially due to T. Ando [Lemma 1]\cite{And73}.    (See also  \cite[Theorem $2. 1$]{Mat93}):
\begin{equation}\label{SDPcharCa}
r(A)=\min\{a:\begin{bmatrix} aI_n +Z&A\\A^*& aI_n -Z\end{bmatrix}\succeq 0\}.
\end{equation}

It is shown in Friedland-Li \cite{FL23} that for $A$ whose entries are Gaussian rationals, the above characterization yield that the bit-complexity of approximating $r(A)$ within precision $\varepsilon>0$ is polynomial in the entries of $A$ and $|\log \varepsilon|$.  The aim of this paper is to extend the results of \cite{FL23} to quaternionic matrices.

We now survey briefly the main results of this paper. In $\S$\ref{sec:quatmat} we discuss mostly known properties of quaternionic matrices that are used in this paper.
In $\S$\ref{sec:SDP} we define the SDP for selfadjoint quaternionic matrices.  We show that this SDP problem can be translated to an SDP problem on standard Hermitian matrices.  Hence,  we can adopt the bit-complexity results of de~Klerk-Vallentin \cite{deklerk_vallentin} to quaternions as in \cite{FL23}. In $\S$\ref{sec:numrange} we discuss the numerical range $\W(A)$ and  numerical radius  $r(A)$ of $A\in\H^{n\times n}$.   The main result of this section is Theorem \ref{qnumrrthm}.   Identity \eqref{x*Axiden} gives an explicit expression for a point in $\W(A)$. This expression gives rise to a characterization of $r(A)$ as the maximum of the maximum eigenvalue of $\sum_{l=1}^4 x_l C_l$, where $\sum_{l=1}^4 x_i^2\le 1$ and $C_1,\ldots,C_4$ are certain structured  real symmetric matrices of order $4n$ induced by $A$. In $\S$\ref{sec:SDPchar} we give the SDP characterizations of $r(A)$ and $r^\vee(A)$, where $r^\vee(\cdot)$ is the dual norm of $r(\cdot)$.   In $\S$\ref{sec:pnrange} we introduce the psuedo-numerical range of $A\in\C^{n\times n}$: $\W_{\pi}(A)=\{\x^\top A\x: \x\in\C^n, \|\x\|=1\}$.    The pseudo-numerical range is induced by the quaternionic numerical range.  We show that the pseudo-numerical range has some similar  properties to the quaternionic numerical range.  ($\W_{\pi}(A)$ is not convex.)
\section{Quaternionic matrices}\label{sec:quatmat}
In this section we review some results on quaternionic matrices that we use in this paper.  Most of these results are well known, and can be found in \cite{Lee49, Bre51,Kip51, STZ94}.   For some results that are not mentioned in these paper we give short proofs.
\subsection{Quaternions}\label{subsec:quat}
We denote the elements  of quaternions $\H$ as $q=q_1+q_2\bi+q_2\bj+q_4\bk$, where $q_i\in\R,  i\in[4]$,   and 
\begin{equation*}
\begin{aligned}
\bi^2=\bj^2=\bk^2=-1,\,
\bi\bj=-\bj\bi=\bk, \, \bj\bk=-\bk\bj=\bi,\quad\bk\bi=-\bi\bk=\bj.
\end{aligned}
\end{equation*}
Then
\begin{equation*}
\bar q=q_1-q_2\bi-q_2\bj-q_4\bk,  |q|=\sqrt{\sum_{i=1}^4 q_i^2}, q\bar q=\bar q q=|q|^2,  q^{-1}=|q|^{-2}\bar q \textrm{ for }q\ne 0.
\end{equation*}
Recall that $\overline{a b}=\bar b \bar a$ for $a,b\in\H$.
Let $\Re q=q_1$.  We observe that 
\begin{equation*}
\Re q=\Re \bar q, \quad \Re pq = \Re qp.
\end{equation*}

Denote by $\H^{m\times n}=\{A=[a_{st}], a_{st}, i\in[m],j\in[n]\}$, the set of $m\times n$ quaternionic matrices.   
Thus $\H^{m\times n}$ is a left and right module over $\H$, where $aA=[aa_{ij}]$ and $Aa=[a_{ij}a]$.  We will mostly view $\H^{m\times n}$ as a right module over $\H$, and as a vector space over $\R$.  We identify $\H^{m\times 1}$ and $\H^{1\times n}$ with $\H^m$ and $(\H^n)^\top$ respectively.

Let $A\in \H^{m\times n}$ and $\w\in\H^m$.  One has two representations of $A$  and $\w$ using complex and real numbers
\begin{equation}\label{Arep}
\begin{aligned}
&A=A_1+A_2\bj=(A_{11}+A_{21}\bi)+(A_{12}+A_{22}\bi)\bj, \\ 
&A_1,A_2\in\C^{m\times n},  \, A_{11},A_{21},A_{12},A_{22}\in\R^{m\times n},\\
&\w=\w_1+\w_2\bj=(\w_{11}+\w_{21}\bi)+(\w_{12}+ \w_{22}\bi)\bj,\\
&\w_1,\w_2\in\C^m, \, \w_{11},\w_{21},\w_{12},\w_{22}\in\R^m.
\end{aligned}
\end{equation}
Observe that $\bj A_2=\bar A_2 \bj$.
Denote 
\begin{equation}\label{projHmn}
\begin{aligned}
\rP_1,\rP_2:\H^{m\times n}\to\C^{m\times n},  P_1(A)=A_1, P_2(A)=A_2,  A=A_1+A_2\bj.
\end{aligned}
\end{equation}

View $A\in\mathbb{H}^{m\times n}$ as a linear transformation $\w^\top  \mapsto \w^\top A$ for $\w\in\H^m$.  By letting 
$$\w=\w_1+\w_2\bj=(\w_{11}+\w_{21}\bi)+(\w{12}+\w_{22}\bi)\bj$$
we obtain the complex  and real representation of $A$:  
\begin{equation}\label{crrep}
C(A)=\begin{bmatrix}A_1&A_2\\-\bar A_2&\bar A_1\end{bmatrix},\,R(A)=\begin{bmatrix}A_{11}&A_{21}&A_{12}&A_{22}\\-A_{21}&A_{11}&-A_{22}&A_{12}\\-A_{12}&A_{22}&A_{11}&-A_{21}\\-A_{22}&-A_{12}&A_{21}&A_{11}\end{bmatrix}.
\end{equation}
Observe
\begin{equation*}
\begin{aligned}
&A^\top= A_1^\top + A_2^\top\bj, \, \bar A=\bar A_1-\bj \bar A_2=\bar A_1- A_2\bj, \,A^*=\bar A^\top= A_1^* -A_2^\top \bj,\\
&C(A^\top)=\begin{bmatrix}A_1^\top&A_2^\top\\ -A_2^*&A_1^*\end{bmatrix},
C(A^*)=\begin{bmatrix}A_1^*&-A_2^\top\\ A_2^*&A_1^\top\end{bmatrix}=C(A)^*,  C(\bar A)=\begin{bmatrix}\bar A_1&-A_2\\ \bar A_2&A_1\end{bmatrix},\\
&A B=(A_1+A_2\bj)(B_1+B_2\bj)=A_1B_1-A_2\bar B_2+(A_1B_2+A_2\bar B_1)\bj,\\
&C(AB)=C(A) C(B), \quad A\in \H^{m\times n},  B\in \H^{n\times p}.
\end{aligned}
\end{equation*}
\begin{definition}\label{defQCn}
Denote by $\rm{Q}_{c}^{m\times n}\subset \C^{(2m)\times (2n)}$ the subspace of matrices of the form $C(A)$ given by \eqref{crrep}.
\end{definition}
Note that $\rm{Q}_c^{m\times n}$ is isomorphic to the subset of $m\times n$ matrices whose entries are matrices in $\rm{Q}_c^{1\times 1}$.  

View $q=q_1+q_2\bj, q_1,q_2\in\C$.  Then 
$$Q(q)=\begin{bmatrix}q_1&q_2\\-\bar q_2&\bar q_1\end{bmatrix}, \det Q(q)=|q|^2,  Q(q^{-1})= |q|^{-2}Q(\bar q)\textrm{ for }q\ne 0.$$

Given $a=a_1+a_2\bi+a_3\bj+a_3\bk\in\H$, the similarity class of $C(a)$ corresponds 
\begin{equation*}
\W(a)=\{b\in \H, \, b=\bar q a q,  q\in\H, |q|=1\},
\end{equation*}
which coincides with the numerical range of $a\in\rH^{1\times 1}$.
It is known that 
\begin{equation}\label{numra}
\begin{aligned}
&\W(a)=\{b=b_1+b_2\bi+b_3\bj+b_4\bk, 
\\&b_1=a_1, \sqrt{b_2^2+b_3^2+b_4^2}=\sqrt{a_2^2+a_3^2+b_4^2}\}.
\end{aligned}
\end{equation}
In particular, $\W(a)$ is a convex set in $\H\sim\R^4$, if and only if $a\in\R$.
Note that there exists a unique $a'\in\C, \Im a'\ge 0$ such that $a'\in \W(a)$.
\subsection{Inner product and  the Gram-Schmidt process}\label{subsec:ipGS}
For $A=[a_{st}]\in\H^{n\times n}$ we let $\tr A=\sum_{s=1}^n a_{ss}$ be the trace of $A$.   Clearly, 
\begin{equation*}
\begin{aligned}
&\tr A^\top =\tr A, \quad \Re \tr A=\Re \tr \bar A, \quad   \tr A^*=  \tr \bar A, \\
&\Re\tr FG=\Re \tr GF \textrm{ for } F\in\H^{m\times n}, G\in \H^{n\times m}.
\end{aligned}
\end{equation*}
The inner product on $\H^{m\times n}$, viewed as a right module over $\H$, is defined as 
$$\langle A,B\rangle:=\tr A^*B= \tr (A_1^* B_1+ A_2^\top \bar B_2)+\big(\tr(A_1^*B_2-A_2^\top \bar B_1)\big)\bj,$$
which is formally defined as the inner product on $\mathbb{C}^{m\times n}$.
It satisfies:
\begin{equation*}
\begin{aligned}
&\langle Aa,Bb\rangle=\bar a \langle A, B\rangle b,  \quad \langle B,A\rangle=\overline{\langle A,B\rangle},\\
&|\langle A,B\rangle|\le \sqrt{\langle A, A\rangle} \sqrt{\langle B, B\rangle} \textrm{ Cauchy-Schwarz inequality},\\
&\textrm{equality holds if and only if} Aa=Bb, |a|+|b|>0.
\end{aligned}
\end{equation*}
Then $\| A\|_F:=\sqrt{\langle A,A\rangle}$ is the Frobenius norm.  Note that
\begin{equation*}
\|A\|_F=\|A^\top\|_F=\|\bar A\|_F=\|A^*\|_F= \sqrt{\sum_{s=1}^m\sum_{t=1}^n|a_{st}|^2}.
\end{equation*}
For $\z\in \H^n$ we let $\|\z\|_F=\|\z^\top\|_F=\|\z\|=\|\z^\top\|$.
If we view $\H^{m\times n}$ as a vector space over $\R$, of dimension $4mn$, then 
$$\langle A,B\rangle_{\R}:=\Re \langle A,B\rangle=\Re\tr A^*B$$ is an inner product over $\R$.   Furthermore,
\begin{equation*}
\Re \tr A^*B=\frac{1}{2}\Re \tr C(A)^* C(B).
\end{equation*}

Two vectors $\x,\y \in \H^n$ are called orthogonal if $\langle \x,\y\rangle=0$.
A set of $l$ vectors $\x_1,\ldots,\x_l$ is called orthonormal if $\langle \x_s,\x_t\rangle=\delta_{st}, s,t\in[l]$.   Given $l$ vectors in $\H^n$ one can perform Gram-Schmidt process to obtain $p\le n$ nonzero orthonormal vectors.

Assume that $\bu_1,\ldots, \bu_n$ is a set of $n$ orthonormal vectors in $\H^n$.  Then $\bu_1,\ldots,\bu_n$ is an orthonormal basis in $\H^n$, viewed as a right module over $\H$:
$\x=\sum_{s=1}^n \bu_s \langle \bu_s, \x\rangle$ for each $\x\in\H^n$.  Let $U=[\bu_1\cdots\bu_n]\in \H^{n\times n}$.  Then $\bu_1,\ldots,\bu_n$ is an orthonormal basis in $\H^n$ if and only if $U$ is unitary: $U^*U=I_n$.  Recall that $U$ is unitary if and only $U U^*=I_n$.   

Let $D=\diag(d_1,\ldots,d_n)\in \H^n$.
Then $D$ is unitary if and only if $|d_s|=1,s\in[n]$.   Clearly,  $U$ is unitary if and only if $UD$ is unitary for some diagonal unitary $D$.
Two matrices $A,B\in\H^{n\times n}$ are called unitary similar if $A=U B U^*$ for some unitary $U$.

Assume \eqref{deffieldF}.
Denote by $\rU_n(\F)\subset \F^{n\times n}$ the group of unitary matrices over $\F$.  Observe that $U\in \rU_n(\H)$ if and only $C(U)\in \rU_{2n}(\C)$.

\subsection{Eigenvalues, eigenvectors and the spectral decomposition of normal matrices}\label{subsec:eigsdn}
Let $A\in\H^{n\times n}$.  Then $\lambda\in\H$ is called (right) eigenvalue if there exists an eigenvector $\x\in\H^n\setminus\{\0\}$ such that $A\x=\x\lambda$.  Note that for $q\in\H\setminus\{0\}$ we have the equality $A(\x q)= (\x q) (q^{-1} \lambda q)$.  Hence, an eigenvalue $\lambda$ induces the eigenvalue set $\W(\lambda)$.
This set reduces to $\{\lambda\}$ if and only if $\lambda$ is real.  
Thus, there exists a unique $\lambda'\in \W(\lambda)$ such that $\lambda'\in\C, \Im \lambda'\ge 0$.
It is well known that every $A\in\H^{n\times n}$ has at least one eigenvalue.  Using the Gram-Schmidt process one deduces that $A$ is unitary similar to upper triangular $T$: $A=UTU^*$ for some unitary $U$.  

The matrix $A$ is normal if $AA^*=A^*A$.  Then $A$ is normal if and only if
\begin{equation}\label{canformnorm} 
A=U\diag(\lambda_1,\ldots,\lambda_n)U^*, \quad, U\in\rU_n(\H), \lambda_s\in\C, \Im \lambda_s\ge 0, s\in [n].
\end{equation}   
Clearly, if $A$ of the above form then $A$ is normal.   Assume that $A=V^*TV$, where $V$ is unitary and $T$ is upper triangular,  is normal.  Then $T$ is normal.  Hence $T$ is diagonal.  Choose a diagonal unitary $D$ such that $\bar D T D=\diag(\lambda_1,\ldots,\lambda_n) $, where $\lambda$'s satisfy the conditions of \eqref{canformnorm}.  Let $U=VD$, and deduce  \eqref{canformnorm}.

Observe that $A$ is normal if and only if $C(A)$ is normal.  Furthermore, if $A$ is normal then the eigenvalues of $C(A)$ are $\lambda_1,\bar\lambda_1,\ldots,\lambda_n,\bar\lambda_n$.
The spectral decomposition of a normal $A$ is 
\begin{equation}\label{specnorm}
\begin{aligned}
&A=\sum_{s=1}^n \bu_s \lambda_s \bu_s^*, \quad \lambda_s\in\C, \Im \lambda_s\ge 0, \langle \bu_s,\bu_t\rangle=\delta_{st}, s,t\in[n],\\
&A\bu_s=\bu_s\lambda_s, \quad s\in[n].
\end{aligned}
\end{equation}
Observe that $A$ is unitary if and only if $A$ is normal, and $|\lambda_s|=1$ for $s\in[n]$.    

Recall that $A\in\F^{n\times n}$ is called a self-adjoint if $A^*=A$.   Denote by
\begin{equation}\label{defHn(F)}
\begin{aligned}
\rH_n(\F)=\{A\in \H^{n\times n}: A^*=A\}, \quad \F\in\{\R,\C,\H\}, 
\end{aligned}
\end{equation}
the real subspace of selfadjoint matrices in $\F^{n\times n}$.  Observe that $A\in\H^{n\times n}$ is self-adjoint if and only if $A$ is normal and $\lambda_s\in\R, s\in[n]$.   Equivalently,  $A\in\rH_n(\H)\iff C(A)\in \rH_{2n}(\C)$.  
Assume that $A\in \rH_n(\H)$.   Then the eigenvalues of $A$ and $C(A)$ are arranged in a nondecreasing order:
\begin{equation}\label{eigsa}
\begin{aligned}
&\lambda_{\max}(A)=\lambda_1(A)\ge \ldots\ge \lambda_n(A)=\lambda_{\min}(A),\\
&\lambda_{2l-1}(C(A))=\lambda_{2l}(C(A))=\lambda_l(A), \quad l\in[n].
\end{aligned}
\end{equation}
The eigenvalues of $A$ have the maximum and the minimum characterization of the Rayleigh quotient
\begin{equation*}
\lambda_{\max}=\max_{\x\ne \0} \frac{\x^* A\x}{\x^*\x},  \quad \lambda_{\min}=\min_{\x\ne \0} \frac{\x^* A\x}{\x^*\x}.
\end{equation*}
The other eigenvalues have $\max-\min$ as in \cite[Section 4.4]{Fri15}.

A sefadjoint matrix $A$ is positive semidefinite (positive definite) if $\lambda_{\min}(A)\ge 0\, (\lambda_{\min}(A) >0)$.   This is equivalent to $\x^* A \x\ge 0\, (\x^* A\x >0)$ for $\x\ne \0$. We denote by $\rH_{n,+}(\F)$ and $\rH_{n,++}(\F)$ the cone of positive semidefinite matrices and its interior in $\rH_n(\F)$ respectively. 

We view $\rH_n(\H)$ as a real vector space of dimension $n(2n-1)$ with an inner product $\langle A,B\rangle =\Re \tr AB$.  Observe that $\rH_{n,+}(\H)$ is a selfadjoint cone in $\rH_n(\H)$ with respect to the above inner product.  That is, $\Re\tr AB\ge 0$ for all $B\in\rH_{n,+}(\H)$ if and only if $A\in \rH_{n,+}(\H)$.
\subsection{Singular value decomposition}\label{subsec: svd}
Let $A\in \H^{m\times n}$.  
Then $F=A^* A\in \rH_{n,+}(\H), G=A A^*\in\rH_{m,+}(\H)$.   We denote by $\sigma_1^2(A)\ge \cdots\ge  \sigma_r^2(A)>0$ all positive eigenvalues $F$.   Here $r$ is the rank of $A$.  
We agree that $\sigma_l(A)=0$ for $l>r$.   Let $F\z_s=\z_s\sigma_s^2(A)=\sigma_s^2(A)\z_s$, where $\z_1,\ldots,\z_n$ are orthonormal.  Hence $G(A\z_s)=\sigma_s^2(A)(A\z_s)$ for $s\in[n]$.   
Let $\w_s=\sigma_s^{-1}(A)(A\z_s)$  for $s\in[r]$.  Then $\w_1,\ldots,\w_r\in\H^m$ are orthonormal vectors.  Hence $G$ has exactly $r$ positive eigenvalues $\sigma_1^2(A)\ge \cdots\ge  \sigma_r^2(A)>0$.  The singular value decomposition (SVD) of $A$ is
\begin{equation}\label{SVDdec}
\begin{aligned}
&A=\sum_{l=1}^r \sigma_l(A)\w_l\z_l^*= W_r\Sigma_r Z_r^*, \\&W_r=
[\w_1\cdots \w_r], \Sigma_r=\diag(\sigma_1(A),\ldots,\sigma_r(A)), Z_r=[\z_1\cdots\z_r].
\end{aligned}
\end{equation}
Recall that 
$C(A^*A)=C(A^*) C(A)=C(A)^*C(A)$.  Hence the number of positive singular values of $C(A)$ is $2r$ and they satisfy the equalities 
\begin{equation}\label{svalhatA}
\begin{aligned}
\sigma_{2s-1}(C(A))=\sigma_{2s}(C(A))=\sigma_s(A), \quad s\in [r].
\end{aligned}
\end{equation}

The following result is a straightforward consequence of the SVD decomposition \eqref{SVDdec} as for the complex matrices \cite[Theorem 4.11.1]{Fri15}:
\begin{proposition}\label{SAA}  Let $A\in\H^{m\times n}$ then $H(A):=\begin{bmatrix}0&A\\A^*&0\end{bmatrix}\in \rH_{m+n}(\H)$.
Its nonzero eigenvalues are $\pm\sigma_1(A),\ldots,\pm \sigma_r(A)$.
\end{proposition}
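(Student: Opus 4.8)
The plan is to read the statement off an explicit orthonormal eigenbasis of $H(A)$ in $\H^{m+n}$ built from the singular vectors in \eqref{SVDdec}. First note that $H(A)^*=\begin{bmatrix}0&(A^*)^*\\A^*&0\end{bmatrix}=H(A)$, so $H(A)\in\rH_{m+n}(\H)$; hence, by the spectral decomposition of selfadjoint quaternionic matrices (the real‑eigenvalue case of \eqref{specnorm}, cf.\ \eqref{eigsa}), $H(A)$ admits an orthonormal basis of eigenvectors of $\H^{m+n}$ with real eigenvalues. It therefore suffices to exhibit $m+n$ pairwise orthonormal eigenvectors and record their eigenvalues.

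Using the Gram--Schmidt process of \S\ref{subsec:ipGS}, extend the orthonormal systems $\w_1,\dots,\w_r\in\H^m$ and $\z_1,\dots,\z_r\in\H^n$ of \eqref{SVDdec} to orthonormal bases $\w_1,\dots,\w_m$ of $\H^m$ and $\z_1,\dots,\z_n$ of $\H^n$. From $A=\sum_{l=1}^r\sigma_l(A)\w_l\z_l^*$ and $A^*=\sum_{l=1}^r\sigma_l(A)\z_l\w_l^*$ together with $\z_k^*\z_l=\delta_{kl}$, $\w_k^*\w_l=\delta_{kl}$ one gets $A\z_l=\sigma_l(A)\w_l$ and $A^*\w_l=\sigma_l(A)\z_l$ for $l\in[r]$ (consistent with $\w_l=\sigma_l^{-1}(A)(A\z_l)$), while $A\z_l=\0$ for $r<l\le n$ and $A^*\w_l=\0$ for $r<l\le m$. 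Writing vectors of $\H^{m+n}$ as $\begin{bmatrix}\x\\\y\end{bmatrix}$ with $\x\in\H^m,\y\in\H^n$, this gives for $l\in[r]$
\begin{equation*}
H(A)\begin{bmatrix}\w_l\\\pm\z_l\end{bmatrix}=\begin{bmatrix}\pm A\z_l\\A^*\w_l\end{bmatrix}=\begin{bmatrix}\pm\sigma_l(A)\w_l\\\sigma_l(A)\z_l\end{bmatrix}=\begin{bmatrix}\w_l\\\pm\z_l\end{bmatrix}\bigl(\pm\sigma_l(A)\bigr),
\end{equation*}
the last equality using that $\sigma_l(A)\in\R$ commutes with every quaternion; and it gives $H(A)\begin{bmatrix}\w_l\\\0\end{bmatrix}=\0$ for $r<l\le m$ and $H(A)\begin{bmatrix}\0\\\z_l\end{bmatrix}=\0$ for $r<l\le n$. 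Thus $\tfrac1{\sqrt2}\begin{bmatrix}\w_l\\\pm\z_l\end{bmatrix}$ ($l\in[r]$) are unit eigenvectors for the eigenvalues $\pm\sigma_l(A)$, and $\begin{bmatrix}\w_l\\\0\end{bmatrix},\begin{bmatrix}\0\\\z_l\end{bmatrix}$ (for $l>r$) are unit eigenvectors for the eigenvalue $0$.

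It remains to check that these $2r+(m-r)+(n-r)=m+n$ vectors are pairwise orthogonal, which is immediate from $\langle\w_l,\w_k\rangle=\langle\z_l,\z_k\rangle=\delta_{lk}$: for instance $\langle\tfrac1{\sqrt2}\begin{bmatrix}\w_l\\\varepsilon\z_l\end{bmatrix},\tfrac1{\sqrt2}\begin{bmatrix}\w_k\\\varepsilon'\z_k\end{bmatrix}\rangle=\tfrac12(1+\varepsilon\varepsilon')\delta_{lk}$ for $\varepsilon,\varepsilon'\in\{\pm1\}$, while all remaining pairings vanish because the $\w$'s and $\z$'s occupy complementary coordinate blocks. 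Hence we obtain an orthonormal eigenbasis of $\H^{m+n}$, so the eigenvalue multiset of $H(A)$ is $\{\pm\sigma_1(A),\dots,\pm\sigma_r(A)\}$ together with $m+n-2r$ zeros; in particular the nonzero eigenvalues are exactly $\pm\sigma_1(A),\dots,\pm\sigma_r(A)$.

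There is no genuine obstacle beyond bookkeeping. The one delicate point is the one‑sided nature of quaternionic eigenvalues: each eigenpair must be presented in right‑eigenvector form $H(A)\bu=\bu\mu$, which is legitimate precisely because the relevant $\mu=\pm\sigma_l(A)$ are real; and one must invoke (as recorded after \eqref{specnorm}) that a selfadjoint quaternionic matrix is unitarily diagonalizable over $\R$, which is what upgrades ``$m+n$ orthonormal eigenvectors'' to ``a complete list of eigenvalues.'' Alternatively one could argue via $H(A)^2=\diag(AA^*,A^*A)$ and \eqref{svalhatA}, but the construction above is cleaner and additionally exhibits the eigenvectors.
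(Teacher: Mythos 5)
Your proof is correct and follows exactly the route the paper invokes: the paper states the result as a ``straightforward consequence of the SVD decomposition \eqref{SVDdec} as for the complex matrices [Fri15, Theorem 4.11.1],'' and your argument fills in that consequence by building the orthonormal eigenbasis of $H(A)$ from the singular vectors (with the only trivial redundancy that $\z_1,\dots,\z_n$ in \eqref{SVDdec} already form an orthonormal basis of $\H^n$, so only the $\w_l$ need extending).
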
 
\subsection{Norms on $\H^{m\times n}$}\label{subsec:norms}
\begin{definition}\label{defnorm}  A map $\|\cdot\|:\mathbb{H}^{m\times n}\to [0,\infty)$ 
is called an $\R$-norm if the following conditions hold:
\begin{equation*}
\begin{aligned}
&\|A\|=0 \iff  A=0,\\
&\|A+B\|\le \|A\|+\|B\| \quad \rm{ triangle\, inequality},\\
&\|Aa\|=\|A\||a| \,\rm{for}\, a\in\R \quad (\R-\rm{homogeneity}),\\
\end{aligned}
\end{equation*}
An $\R$-norm is called an $\H$-norm if one has the equality $\|Aa\|=\|aA\|=\|A\||a|$ for $a\in\H$.
\end{definition}
\begin{proposition}\label{Schatnorm}  Let $A\in\rH^{m\times n}$ with $\rank A=r$.
Then for $p\in[1,\infty]$ the quantity $\|A\|_p:=\big(\sum_{l=1}^r\sigma_l^p(A)\big)^{1/p}$ is an $\H$-norm on $\H^{m\times n}$, (called $p$-Schatten norm).  Furthermore,
\begin{equation}\label{UVSchateq}
\|UAV\|_p=\|A\|_p \,\rm{for}\, U\in \rU_m(\H),  V\in \rU_n(\H), p\in[1,\infty].
\end{equation}
\end{proposition}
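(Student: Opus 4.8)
The plan is to reduce the statement to the corresponding known facts for complex matrices via the representation $C(\cdot)$ of \eqref{crrep}. First I would record the identity $\|A\|_p = c_p\,\|C(A)\|_p$ with $c_p = 2^{-1/p}>0$ (reading $2^{-1/\infty}=1$), where $\|\cdot\|_p$ on the right denotes the $p$-Schatten norm on $\C^{2m\times 2n}$. Indeed, by \eqref{svalhatA} the nonzero singular values of $C(A)$ are exactly $\sigma_1(A),\dots,\sigma_r(A)$, each occurring twice, so for $p\in[1,\infty)$ one has $\|C(A)\|_p^p = 2\sum_{l=1}^r\sigma_l^p(A)=2\|A\|_p^p$, and for $p=\infty$ (with $\|\cdot\|_\infty$ the largest singular value) $\|C(A)\|_\infty=\sigma_1(C(A))=\sigma_1(A)=\|A\|_\infty$. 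The fact that $\|\cdot\|_p$ is a norm on $\C^{2m\times 2n}$, invariant under left and right multiplication by complex unitaries, is classical (see \cite{Fri15}).

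Next I would observe that $C:\H^{m\times n}\to\C^{2m\times 2n}$ is $\R$-linear and injective --- both are immediate from \eqref{crrep}, since $C(A)$ displays $A_1,A_2$ as its top two blocks and $A=A_1+A_2\bj$. Hence $A\mapsto\|C(A)\|_p$ is a norm on the real vector space $\H^{m\times n}$, being the composition of the linear embedding $C$ (onto the subspace $\mathrm{Q}_c^{m\times n}$ of Definition \ref{defQCn}) with the complex $p$-Schatten norm; multiplying by $c_p>0$, the same holds for $\|\cdot\|_p$. Explicitly this yields $\|A\|_p=0\iff A=0$, the triangle inequality, and $\|A\alpha\|_p=|\alpha|\,\|A\|_p$ for $\alpha\in\R$ (using $C(A\alpha)=\alpha C(A)$), i.e. the three $\R$-norm axioms.

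For the unitary invariance \eqref{UVSchateq}, given $U\in\rU_m(\H)$ and $V\in\rU_n(\H)$ I would use $(UAV)^*(UAV)=V^*(A^*A)V$, which is unitarily similar to $A^*A$ and therefore has the same eigenvalues; so $\rank(UAV)=r$ and $\sigma_l(UAV)=\sigma_l(A)$ for every $l$, giving $\|UAV\|_p=\|A\|_p$ straight from the definition. (Equivalently, push through $C$ using $C(UAV)=C(U)C(A)C(V)$ with $C(U),C(V)$ complex unitaries.) The $\H$-homogeneity then follows by combining this with $\R$-homogeneity: for $a\in\H\setminus\{0\}$ set $q=|a|^{-1}a$, so $|q|=1$ and $qI_m\in\rU_m(\H)$, $qI_n\in\rU_n(\H)$ are (diagonal) unitaries; since $aA=|a|\,(qI_m)A$ and $Aa=|a|\,A(qI_n)$, we get $\|aA\|_p=\|Aa\|_p=|a|\,\|A\|_p$, the case $a=0$ being trivial.

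The only points that need care are the singular value bookkeeping in the first step (keeping the factor $2$, resp. the $p=\infty$ convention, straight) and the observation that $\R$-linearity --- not $\C$-linearity --- of $C$ is all that is required to transport the norm axioms; the rest is routine.
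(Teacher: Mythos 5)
Your proof is correct and takes essentially the same approach as the paper: both reduce the triangle inequality to the complex Schatten norm on $\C^{2m\times 2n}$ via $\|A\|_p = 2^{-1/p}\|C(A)\|_p$ (using \eqref{svalhatA}), and both verify the remaining properties through singular-value bookkeeping. The one small stylistic difference is that you derive $\H$-homogeneity from $\R$-homogeneity combined with unitary invariance (via $aA = |a|(qI_m)A$ with $q=|a|^{-1}a$), whereas the paper asserts $\sigma_l(Aa)=\sigma_l(aA)=|a|\sigma_l(A)$ directly from the SVD; your route is a clean alternative and equally valid.
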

\begin{proof} 
From the definition of the SVD of $A$ we easily deduce.
\begin{equation}
\sigma_l(Aa)=\sigma_l(aA)=|a|\sigma_l(A), \quad l\in[r], 
\sigma(\bar A
\end{equation}
Hence $\|Aa\|_p=\|aA\|_p=|a|\|A\|_p$.  Clearly, $\|A\|_p=0$ if and only if $A=0$.
The equality \eqref{svalhatA} yields that $\|A\|_p=2^{-1/p}\|C(A)\|_p$.  It is well known that $\|C\|_p$ is a norm on $\C^{m\times n}$ \cite[Problem 4, Section 4.11]{Fri15}.   Hence $\|\cdot\|_p$ satisfies the triangle inequality.  

Clearly, $UAV$ and $A$ have the same singular values for unitary $U$ and $V$.  Hence, \eqref{UVSchateq} holds.
\end{proof}

Note that 
\begin{equation}\label{21intynrm}
\begin{aligned}
&\|A\|_F=\|A\|_2,\\
&\|A\|_1=\sum_{l=1}^n \sigma_l(A)-\textrm{the nuclear norm},\\
&\|A\|_{\infty}=\sigma_1(A)=\max\{|\langle \bw,A\z\rangle|, \|\bw\|=\|\z\|=1\}=\\
&\max\{\Re \langle \bw,A\z\rangle, \|\bw\|=\|\z\|=1\}-\textrm{the spectral norm}. 
\end{aligned}
\end{equation}
We now recall the definition of the dual norm on $\H^{m\times n}$:
\begin{definition}\label{defdnrm}
Let $\|\cdot\|$ be an $\R$-norm on $\mathbb{H}^{m\times n}$.  Then 
\begin{equation}\label{defdnrm1}
\|A\|^\vee:=\max\{\Re\langle B,A\rangle, \|B\|\le 1\}
\end{equation}
is the dual $\R$-norm on $\H^{m\times n}$.
\end{definition}
Recall that the dual of the dual norm is the original norm.  Assume that $\|\cdot\|$ is an 
$\H$-norm.  Then 
$\|A\|^\vee:=\max\{|\langle B,A\rangle|, \|B\|\le 1\}$.  Hence
$\|\cdot\|^\vee$ is an $\H$-norm.  It is straightforward to show that $\|A\|_p^\vee=\|A\|_q$, where $1/p+1/q=1$.
\section{Semidefinite programming for quaternionic matrices}\label{sec:SDP}
Assume \eqref{defHn(F)}.  Then, $\rH_n(\C)=\rH_n$, and $\rH_n(\R)=\rS_n(\R)$-the space of real symmetric matrices of order $n$.
Clearly, $\rH_n(\R)\sim\R^{n(n+1)/2}$,  $\rH_n\sim\R^{n^2}$, and $\rH_n(\H)\sim\R^{n(2n-1)}$.
The inner product in $\rH_n(\F)$ is $\langle A,B\rangle =\Re\tr AB$.  
Note that  $\tr AB\in\R$ for $A,B\in \rH_n(\F)$ and $\F\in\{\R,\C\}$.
However,  $\tr AB$ may not be a real number for $n\ge 2$, $A,B\in\rH_n(\H)$ :
\begin{equation*}
A=\begin{bmatrix}0&\bi\\-\bi&0\end{bmatrix}, B=\begin{bmatrix}0&\bj\\-\bj&0\end{bmatrix}, \tr AB=-2\bi\bj=-2\bk.
\end{equation*}

Thus $\|A\|_F=\sqrt{\langle A, A\rangle}=\sqrt{\tr A^2}$ is the Frobenius norm of $A\in\rH_n(\F)$.  
For $Y_0\in \rH_n(\F)$ and $r\ge 0$ denote by $\rB(Y_0,r)=\{X\in\rH_n(\F),  \|X-Y_0\|_F\le r\}$ the closed ball in $\rH_n(\F)$ centered at $Y_0$ with radius $r$.

A standard semidefinite program  for $\F\in \{\R,\C\}$ is
\begin{equation}\label{srSDP}
val=\inf\{\langle C,X\rangle: X\in \rH_{n,+}(\F), \langle A_j, X\rangle =b_j, j\in [m]\},
\end{equation}
where $C,A_j\in \rH_n(\F), b_j\in\R$ for $j\in[m]$.
We also will call the above infimum problem as a standard semidefinite program  for quaternions: $\F=\H$.
Denote by $\cF$ the feasible set
\begin{equation}\label{defcF}
\cF=\{X\in \rH_{n,+}(\F):\langle A_j, X\rangle =b_j, j\in [m]\}.
\end{equation}
Let
\begin{equation}\label{defLAb}
\rL(A_1,\ldots,A_m,b_1,\ldots,b_m)=\{X\in\rH_n(\F): \langle A_j, X\rangle =b_j, j\in [m]\}.
\end{equation}
Then $\rL(A_1,\ldots,A_m,b_1,\ldots,b_m)$ is an affine subspace whose dimension is
$k\in\{-1,0\}\cup[\dim \rH_n(\F)]$.  So $k=-1$ if and only if $\rL(A_1,\ldots,A_m,b_1,\ldots,b_m)=\emptyset$, and $k\ge 0$ if $\rL(A_1,\ldots,A_m,b_1,\ldots,b_m)=X_0+\U$ where $$\U=\rL(A_1,\ldots,A_m,0,\ldots,0)\subset \rH_n(\F)$$ is a subspace of dimension $k$.  

By introducing a standard basis in $\rH_n(\F)$ one can use real Gauss elimination to determine
the dimension $d$ of $\rL(A_1,\ldots,A_m,b_1,\ldots,b_m)$.  In particular, if $d=\dim \rH_n(\F)-\delta\ge 0$, then $m\ge \delta$, and there exists a subset $\{b_{i_1},\ldots, b_{i_{\delta}}\}$ for some $\{1\le i_1<\ldots <i_{\delta}\}\subset [m]$ such that
\begin{equation}\label{recondL}
\begin{aligned}
&\rL(A_1,\ldots,A_m,b_1,\ldots,b_m)=\rL(A_{i_1},\ldots,A_{i_\delta},b_{i_1},\ldots,b_{i_\delta}),  \\
&\dim \rL(A_1,\ldots,A_m,b_1,\ldots,b_m)=d=\dim \rH_n(\F)-\delta\ge 0.
\end{aligned}
\end{equation}

As explained in \cite{FL23} we can assume that a standard SDP problem is of the form \cite[Eq. (1)]{VB96}:
\begin{equation}\label{SDPsf}
\begin{aligned}
&val=\inf\{\sum_{i=1}^k c_i s_i: X_0+\sum_{i=1}^k s_i X_i\succeq 0, \\
&X_0,\ldots,X_k\in\rH_n(\F),(s_1,\ldots,s_k)^\top\in\R^k\}.
\end{aligned}
\end{equation}
Without loss of generality we can assume that $X_1,\ldots,X_k$ are linearly independent,  and either $X_0=0$ or $X_0,X_1,\ldots,X_k$ are linearly independent.
In that case there is a simple way to characterize the set
\begin{equation}\label{desccA}
\begin{aligned}
&\cA(X_0,\ldots,X_k)=\{X=X_0+\sum_{i=1}^k s_iX_i:\\
&(s_1,\ldots,s_k)^\top \in\R^k\},  
\end{aligned}
\end{equation}
where $X_0,\ldots,X_k\in\rH_n(\F)$.   
\begin{lemma}\label{lcharcAX}  Let $X_0,X_1,\ldots,X_k\in\rH_n(\F)$, where $1\le k<\dim\rH_n(\F)$.  Assume that $X_1,\ldots,X_k$ are linearly independent.  Set  $m=\dim\rH_n(\F)-k$.  The set \eqref{desccA} is given by  \eqref{defLAb} as follows:
\begin{enumerate}[(a)]
\item Assume that $X_0=0$.  Then,  $b_i=0$ for $i\in[m]$, and
$A_1,\ldots,A_m$ is a basis in the subspace 
$\{X: \Re\tr X_i X=0, i\in[k]\}$.
\item Assume that $X_0,\ldots,X_k$ are linearly independent.   Then $b_i=0$ for $i\in[m-1],$ and matrices $A_1,\ldots, A_{m-1}$ is a basis in the subspace 
$\{X\in\rH_n(\H), \Re\tr X_i X=0, i=0,1,\ldots,k\}$.  A matrix $A_m$ is  a solution to $\{X\in\rH_n(\H), \tr X_0A_m=b_m=1, \tr X_iA_m=0, i\in[k]\}$.
\end{enumerate}
\end{lemma}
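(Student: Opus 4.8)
The plan is to carry out everything inside the finite-dimensional real inner product space $(\rH_n(\F),\langle\cdot,\cdot\rangle)$ with $\langle A,B\rangle=\Re\tr AB$, which is a genuine (positive definite) inner product since $\langle A,A\rangle=\Re\tr A^2=\Re\tr AA^*=\|A\|_F^2$ (cf.\ \secref{subsec:eigsdn}). I will use repeatedly the standard facts for such a space: for any subspace $W\subseteq\rH_n(\F)$ one has $\dim W+\dim W^\perp=\dim\rH_n(\F)$ and $(W^\perp)^\perp=W$, where $W^\perp=\{X\in\rH_n(\F):\Re\tr XY=0\ \text{for all}\ Y\in W\}$; moreover the solution set $\{X:\Re\tr A_jX=0,\ j\in[\ell]\}$ of a homogeneous system is exactly $(\spa\{A_1,\dots,A_\ell\})^\perp$. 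Throughout write $W=\spa\{X_1,\dots,X_k\}$, which has dimension $k$ by hypothesis.

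For part (a) the set $\cA(0,X_1,\dots,X_k)$ is just $W$. The subspace $\{X:\Re\tr X_iX=0,\ i\in[k]\}$ equals $W^\perp$, of dimension $\dim\rH_n(\F)-k=m$, so it admits a basis $A_1,\dots,A_m$; with $b_1=\cdots=b_m=0$ one gets $\rL(A_1,\dots,A_m,0,\dots,0)=(\spa\{A_1,\dots,A_m\})^\perp=(W^\perp)^\perp=W=\cA(0,X_1,\dots,X_k)$, which is the claim.

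For part (b), since $X_0,\dots,X_k$ are linearly independent we have $X_0\notin W$, so $W':=\spa\{X_0,X_1,\dots,X_k\}=\R X_0\oplus W$ has dimension $k+1$ and $\cA(X_0,\dots,X_k)=X_0+W$. The subspace $\{X:\Re\tr X_iX=0,\ i=0,1,\dots,k\}=(W')^\perp$ has dimension $m-1$; pick a basis $A_1,\dots,A_{m-1}$ of it and put $b_1=\cdots=b_{m-1}=0$. It remains to build $A_m$: I claim there is $A_m\in W^\perp$ with $\Re\tr A_mX_0=1$, and then I take $b_m=1$. Indeed, $X_0\notin W$ makes the inclusion $(W')^\perp\subsetneq W^\perp$ strict, so one may choose $A_m\in W^\perp\setminus(W')^\perp$; since $W'=\R X_0+W$ and $A_m\perp W$, non-orthogonality of $A_m$ to $W'$ forces $\Re\tr A_mX_0\ne0$, and after rescaling $\Re\tr A_mX_0=1$. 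Finally I verify the two inclusions. Any $X$ satisfying the first $m-1$ equations lies in $((W')^\perp)^\perp=W'$, say $X=tX_0+w$ with $t\in\R,\ w\in W$; then the last equation reads $1=\Re\tr A_mX=t\,\Re\tr A_mX_0+\Re\tr A_mw=t$, so $X=X_0+w\in\cA(X_0,\dots,X_k)$. Conversely, for $X=X_0+w$ with $w\in W$ both $X_0$ and $w$ lie in $W'$, so the first $m-1$ equations hold, and $\Re\tr A_mX=\Re\tr A_mX_0+\Re\tr A_mw=1=b_m$; hence $\rL(A_1,\dots,A_m,b_1,\dots,b_m)=X_0+W=\cA(X_0,\dots,X_k)$.

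The only step that is not routine bookkeeping is the construction of $A_m$ in part (b): one must exhibit an element orthogonal to $X_1,\dots,X_k$ yet non-orthogonal to $X_0$, and both its existence and the fact that appending its equation cuts the dimension from $k+1$ down to the required $k$ rest entirely on the hypothesis $X_0\notin\spa\{X_1,\dots,X_k\}$. Everything else is orthogonal-complement duality and dimension counting in $\rH_n(\F)$, valid uniformly for $\F\in\{\R,\C,\H\}$ once one knows $\langle\cdot,\cdot\rangle$ is positive definite.
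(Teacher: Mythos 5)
Your proof is correct and is precisely the orthogonal-complement and dimension-count argument that the paper declares ``straightforward'' and omits. The only step requiring thought---producing $A_m\in W^\perp$ with $\Re\tr A_m X_0=1$ from the strict inclusion $(W')^\perp\subsetneq W^\perp$ forced by $X_0\notin W$---is handled cleanly, and you correctly interpret the paper's conditions $\tr X_0A_m=1$, $\tr X_iA_m=0$ as $\Re\tr$ conditions (i.e.\ the inner product on $\rH_n(\F)$), which is the intended reading.
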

The proof of the Lemma is straightforward.

Then the dual problem is of the form \cite[Eq. (27)]{VB96}:
\begin{equation}\label{dSDPsf}
val^\vee=\sup\{-\Re\tr X_0 Z: \Re\tr X_i Z=c_i, i\in[k],Z\in \rH_n(\F)\}.
\end{equation}
The Slater constraint condition \cite[Theorem 4.7.1]{GM12}, see also \cite[Corollary 2.2]{FL23}, is:
\begin{theorem}\label{corSlater}  Assume that the feasible set of \eqref{SDPsf} contains a positive definite matrix, and $val$ is finite.  Then the dual problem \eqref{dSDPsf} is feasible, and $val=val^\vee$.
\end{theorem}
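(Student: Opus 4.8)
The plan is to obtain the statement from finite-dimensional conic duality, using the facts recorded in \secref{sec:quatmat}: that $\rH_{n,+}(\F)$ is a closed convex cone in the real inner product space $(\rH_n(\F),\langle\cdot,\cdot\rangle)$ with $\langle A,B\rangle=\Re\tr AB$, that it is \emph{self-dual} (i.e.\ $\Re\tr AB\ge 0$ for every $B\in\rH_{n,+}(\F)$ if and only if $A\in\rH_{n,+}(\F)$), and that its interior $\rH_{n,++}(\F)$ is nonempty. These are precisely the ingredients needed by the classical strong-duality argument for real semidefinite programs, which then transfers verbatim; alternatively one may pass through the Hermitian translation of \secref{sec:SDP} and quote \cite[Theorem 4.7.1]{GM12} directly. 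I first record weak duality: if $X=X_0+\sum_{i=1}^k s_iX_i\in\rH_{n,+}(\F)$ is feasible for \eqref{SDPsf} and $Z\in\rH_{n,+}(\F)$ satisfies $\Re\tr X_iZ=c_i$ for $i\in[k]$, then $\sum_{i=1}^k c_is_i+\Re\tr X_0Z=\Re\tr\big((X_0+\sum_{i=1}^k s_iX_i)Z\big)=\langle X,Z\rangle\ge 0$ by self-duality, so $\sum_i c_is_i\ge-\Re\tr X_0Z$, and hence $val\ge val^\vee$.

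For strong duality I introduce the value function $v\colon\rH_n(\F)\to[-\infty,+\infty]$ defined by $v(Y)=\inf\{\sum_{i=1}^k c_is_i:\,X_0+\sum_{i=1}^k s_iX_i+Y\in\rH_{n,+}(\F)\}$, so that $v(0)=val$. The set $\{(s,Y):X_0+\sum_i s_iX_i+Y\in\rH_{n,+}(\F)\}$ is the preimage of a convex cone under an affine map, hence convex, and $v$ is its partial infimum over the variable $s$, hence convex. By hypothesis there is $s^0\in\R^k$ with $X_0+\sum_i s^0_iX_i\in\rH_{n,++}(\F)$; since this set is open, $X_0+\sum_i s^0_iX_i+Y\in\rH_{n,+}(\F)$ for every $Y$ in some ball $\rB(0,\rho)$, whence $v(Y)\le\sum_i c_is^0_i$ throughout $\rB(0,\rho)$. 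Thus $v$ is bounded above on a neighbourhood of $0$ and $v(0)=val$ is finite by hypothesis; if $v(Y_1)=-\infty$ for some $Y_1\in\rB(0,\rho)$, then convexity along the segment from $Y_1$ to $-Y_1$, which passes through $0$, would force $v(0)=-\infty$, a contradiction, so $v$ is finite and continuous on the interior of $\rB(0,\rho)$ and therefore subdifferentiable at $0$: there is $Z\in\rH_n(\F)$ with $v(Y)\ge val-\langle Z,Y\rangle$ for all $Y\in\rH_n(\F)$.

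Now fix $s\in\R^k$ and $P\in\rH_{n,+}(\F)$ and put $Y=P-(X_0+\sum_i s_iX_i)$; then $X_0+\sum_i s_iX_i+Y=P\in\rH_{n,+}(\F)$, so $v(Y)\le\sum_i c_is_i$, which together with the subgradient inequality yields
\[
val+\langle Z,X_0\rangle+\sum_{i=1}^k s_i\big(\langle Z,X_i\rangle-c_i\big)\le\langle Z,P\rangle
\]
for all $s\in\R^k$ and all $P\in\rH_{n,+}(\F)$. Since the left-hand side does not depend on $P$ and $\rH_{n,+}(\F)$ is a cone, $\langle Z,P\rangle$ must be $\ge 0$ for every $P\in\rH_{n,+}(\F)$ (otherwise scaling such a $P$ drives the right-hand side to $-\infty$), i.e.\ $Z\in\rH_{n,+}(\F)$ by self-duality; taking $P=0$ and using that a linear function of $s\in\R^k$ that is bounded above must be constant, we get $\langle Z,X_i\rangle=c_i$ for $i\in[k]$ and $val+\langle Z,X_0\rangle\le 0$. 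Hence $Z$ is feasible for \eqref{dSDPsf} and $val^\vee\ge-\Re\tr X_0Z\ge val$; combined with weak duality this gives $val=val^\vee$, and in particular the dual is feasible.

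The single nontrivial step is the construction of a finite-valued subgradient of $v$ at $0$, and this is exactly where both hypotheses enter: the positive definite (Slater) point forces $v$ to be bounded above on a full $\|\cdot\|_F$-ball around $0$, finiteness of $val$ gives $v(0)\in\R$, and together these invoke the standard fact that a convex function on a finite-dimensional space that is finite at a point and bounded above near it is continuous, hence subdifferentiable, there. The remaining steps are routine cone bookkeeping; because $\rH_n(\F)$ is just a finite-dimensional real Euclidean space carrying the self-dual cone $\rH_{n,+}(\F)$, the argument is insensitive to the choice $\F\in\{\R,\C,\H\}$, so nothing about quaternions is special here.
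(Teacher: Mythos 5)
The paper does not prove this theorem at all: it simply cites \cite[Theorem 4.7.1]{GM12} (see also \cite[Corollary 2.2]{FL23}) and relies on the Hermitian-translation remark of \secref{sec:SDP} to transfer the statement to $\F=\H$. Your write-up instead gives a self-contained proof via the classical value-function (perturbation) argument for conic programs, and it is correct. You establish weak duality by pairing a primal-feasible $X$ with a dual-feasible $Z$ and invoking self-duality of $\rH_{n,+}(\F)$; you then define the perturbation function $v(Y)=\inf\{\sum_i c_is_i : X_0+\sum_i s_iX_i+Y\succeq 0\}$, show it is convex and, using the Slater point and finiteness of $val$, that it is finite and bounded above on a $\|\cdot\|_F$-ball around $0$, hence subdifferentiable at $0$; the subgradient $Z$ is shown to lie in the cone (again by self-duality, scaling $P$), to satisfy the equality constraints (the linear-in-$s$ term must vanish), and to give $-\langle Z,X_0\rangle\ge val$, which together with weak duality yields $val=val^\vee$. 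The trade-off is clear: the paper's citation is shorter and offloads the work, while your argument makes explicit exactly which structural facts about $\rH_{n,+}(\F)$ are used (closed convex self-dual cone with nonempty interior in a finite-dimensional real inner-product space), thereby making the transfer from $\C$ to $\H$ manifest rather than implicit. One small point worth noting: the paper writes $Z\in\rH_n(\F)$ in \eqref{dSDPsf}, but the intended dual constraint is $Z\in\rH_{n,+}(\F)$ (as in \cite[Eq.\ (27)]{VB96}), and your proof correctly treats the dual variable as positive semidefinite.
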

\subsection{Complexity results for semidefinite programming}\label{subsec:comSDP}
As in \cite{FL23} it is possible to adopt the complexity results of  de Klerk-Vallentin  \cite[Theorem 1.1]{deklerk_vallentin} to quaternions.   Namely, we translate the SDP problem \eqref{srSDP} for $\F=\H$ to the SDP problem  \eqref{srSDP} for $\F=\C$, by considering the matrices $C(A)\in\rH_n$ for $A\in\rH_n(\H)$.

Denote by $\Q[\F]\subset \F$ the subfield of rationals over $\F$.  Thus $\Q[\R]$ is the field of real rationals,  $\Q[\C]$ is the field of Gaussian rationals: $\Q+\Q\bi$, and $\Q[\H]$ is the filed of quaternionic rationals: $\Q+\Q\bi+\Q\bj+\Q\bk$.
Then the complexity results of de Klerk-Vallentin can be stated in the following form \cite[Section 2.3]{FL23}:
\begin{theorem}\label{cdeKVal}
Let $\F$ be either the field of real numbers $\R$, the field of complex numbers $\C$, or the skew-field of quaternions $\H$.   Consider the SDP problem \eqref{srSDP}. 
Assume that $A_j\in \rH_n(\F)\cap \Q^{n\times n}[\F], b_j\in \Q$ for $j\in [m]$,  and  $\dim\rL(A_1,\ldots,A_m,b_1,\ldots,b_m)=n^2-m\ge 1$.
Suppose that there exists $Y_0\in\rH_{n,+}(\F)\cap \Q^{n\times n}[\F]$ in the feasible set $\cF$ given by \eqref{defcF}, and $0<r\le R\in\Q$ such that the condition
\begin{equation}\label{BrRcond}
\begin{aligned}
&\rL(A_1,\ldots,A_m,b_1,\ldots,b_m)\cap \rB(Y_0,r)\subseteq \cF\\
&\subseteq \rL(A_1,\ldots,A_m,b_1,\ldots,b_m)\cap \rB(Y_0,R)
\end{aligned}
\end{equation} 
holds.
Then for $C\in\rH_n(\F)\cap \Q^{n\times n}[\F]$ and rational $\varepsilon>0$ one can find $X^\star\in \cF$ in poly-time using the short step primal interior point method combined with  Diophantine approximation such that: $\langle C,X^\star\rangle-\varepsilon\le val$, where the polynomial is in $n,\log R/r, |\log \varepsilon|$ and the bit size of the data $Y_0,C,A_1,\ldots,A_m,b_1,\ldots,b_m$.
\end{theorem}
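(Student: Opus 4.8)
The plan is to reduce the quaternionic case $\F=\H$ to the cases $\F\in\{\R,\C\}$, which are exactly \cite[\S2.3]{FL23}: using the complex representation $C(\cdot)$ of \eqref{crrep} we turn the size-$n$ SDP \eqref{srSDP} over $\rH_n(\H)$ into a size-$2n$ SDP over $\rH_{2n}(\C)$ and apply the already known complex case to the latter. The map $C\colon\rH_n(\H)\to\rH_{2n}(\C)$ is $\R$-linear and injective, and by the eigenvalue-doubling identity \eqref{eigsa} the spectrum of $C(X)$ is that of $X$ with every multiplicity doubled; hence $X\succeq 0\iff C(X)\succeq 0$, and $C$ restricts to a bijection of $\rH_{n,+}(\H)$ onto the positive-semidefinite part of the subspace $C(\rH_n(\H))\subset\rH_{2n}(\C)$ (cf.\ Definition \ref{defQCn}). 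From $C(XY)=C(X)C(Y)$ and $\Re\tr A^*B=\tfrac12\Re\tr C(A)^*C(B)$ one gets $\langle X,Y\rangle=\tfrac12\tr C(X)C(Y)$ for $X,Y\in\rH_n(\H)$, so in particular $\|C(X)-C(Y)\|_F=\sqrt2\,\|X-Y\|_F$: thus $C$ is, up to the scalar $\sqrt2$, an isometric embedding onto $C(\rH_n(\H))$.

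I would next spell out the target complex SDP. The subspace $C(\rH_n(\H))$ is cut out of $\rH_{2n}(\C)$ by the block relations characterizing the image of $C$, which are $\Q$-linear (indeed with $\{0,\pm 1\}$ coefficients) in the real and imaginary parts of the entries; so it is a $\Q$-rational subspace, and its orthogonal complement in $\rH_{2n}(\C)$ has an $\R$-orthogonal basis $B_1,\dots,B_p$, where $p=\dim\rH_{2n}(\C)-\dim\rH_n(\H)=n(2n+1)$, consisting of matrices in $\Q^{2n\times 2n}[\C]$ of bit size $O(1)$; and $Y\in C(\rH_n(\H))$ iff $Y\in\rH_{2n}(\C)$ and $\Re\tr B_iY=0$ for $i\in[p]$. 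Now take the standard complex SDP \eqref{srSDP} on $\rH_{2n,+}(\C)$ with objective matrix $\tfrac12 C(C)$, constraint matrices $\tfrac12 C(A_1),\dots,\tfrac12 C(A_m),B_1,\dots,B_p$, and right-hand sides $b_1,\dots,b_m,0,\dots,0$. All this data lies in the relevant Gaussian-rational spaces with bit size polynomial in that of $C,A_1,\dots,A_m$, the constraint matrices are linearly independent, and by the previous paragraph $X\mapsto C(X)$ maps the feasible set $\cF$ of \eqref{defcF} bijectively onto the feasible set $\tilde\cF$ of this complex SDP, carries the affine slice $\rL(A_1,\dots,A_m,b_1,\dots,b_m)$ of \eqref{defLAb} onto the complex slice $\tilde\rL$, and preserves objective values; hence the two problems have the same finite value $val$, and the dimension hypothesis for the size-$2n$ complex problem (that its $m+p$ constraints are independent and leave a positive-dimensional slice) follows from $\dim\rL(A_1,\dots,A_m,b_1,\dots,b_m)=\dim\rH_n(\H)-m\ge 1$, since $(2n)^2-(m+p)=\dim\rH_n(\H)-m$.

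Then I would transfer the regularity condition \eqref{BrRcond}. With $\tilde Y_0:=C(Y_0)\in\rH_{2n,+}(\C)\cap\Q^{2n\times 2n}[\C]$ lying in $\tilde\cF$, the scaled isometry gives $C\big(\rL(A_1,\dots,A_m,b_1,\dots,b_m)\cap\rB(Y_0,\rho)\big)=\tilde\rL\cap\rB(\tilde Y_0,\sqrt2\,\rho)$ for every $\rho\ge 0$; since $\rho<\sqrt2\,\rho<2\rho$, shrinking the inner radius to the rational $r$ and enlarging the outer one to the rational $2R$ yields $\tilde\rL\cap\rB(\tilde Y_0,r)\subseteq\tilde\cF\subseteq\tilde\rL\cap\rB(\tilde Y_0,2R)$ with $\log(2R/r)=O(\log(R/r))$. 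Applying the known case $\F=\C$ of the theorem to this size-$2n$ problem produces, in poly-time by the short step primal interior point method together with Diophantine approximation, a matrix $Y^\star\in\tilde\cF$ with $\langle\tfrac12 C(C),Y^\star\rangle-\varepsilon\le val$. Since $Y^\star\in\tilde\cF\subseteq C(\rH_n(\H))$ the matrix $X^\star:=C^{-1}(Y^\star)$ lies in $\cF$, and $\langle C,X^\star\rangle=\langle\tfrac12 C(C),Y^\star\rangle$, so $\langle C,X^\star\rangle-\varepsilon\le val$; all complexity parameters ($2n$, $\log(2R/r)$, $|\log\varepsilon|$, and the bit sizes of the reduced data) are polynomially related to those of the original quaternionic data, which is the assertion.

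The bit-size and dimension bookkeeping is routine. The two points that genuinely need care are (i) that a positive semidefinite complex matrix lying in $C(\rH_n(\H))$ equals $C(X)$ for a positive semidefinite $X$ — this is precisely where \eqref{eigsa} enters — and (ii) carrying the quantitative inclusions \eqref{BrRcond} across the factor-$\sqrt2$ isometry while keeping both radii rational and polynomially bounded; I expect (ii) to be the only mildly delicate step, and the slack between $\rho$, $\sqrt2\,\rho$, and $2\rho$ used above is exactly what makes it go through.
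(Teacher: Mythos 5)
Your proposal is correct and follows exactly the route the paper indicates: the paper dispatches the quaternionic case with the one-line remark at the start of \S3.1 that one translates the $\H$-SDP to a $\C$-SDP via the complex representation $C(\cdot)$ and then invokes de~Klerk--Vallentin / \cite[\S 2.3]{FL23}, and your argument is a careful, correct fleshing-out of precisely that reduction (the $\sqrt2$-scaled isometry, the extra $\Q$-rational linear constraints cutting out $C(\rH_n(\H))\subset\rH_{2n}(\C)$, the transfer of the ball sandwich \eqref{BrRcond} with the rational radii $r$ and $2R$, and the use of the eigenvalue-doubling identity \eqref{eigsa} for positive semidefiniteness).
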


\section{Numerical range}\label{sec:numrange}
The numerical range  and the numerical radius of $A\in \H^{n\times n}$, referred sometimes as qnumerical range and qnumerical radius, is given by 
\begin{equation}\label{defWArA}
\begin{aligned}
&\W(A)=\{\x^* A\x: \x\in \H^n, \|\x\|=1\},\\
&r(A)=\max\{|\x^* A\x|:\|\x\|=1\}=\max\{\Re \bar t \x^* A \x :\|\x\|=1, |t|=1\}.
\end{aligned}
\end{equation}
We denote by co$(\W(A))$ the convex hull of $\W(A)$ in $\H\sim\R^4$.   As $\W(A)$ is a compact set, it follows that co$(\W(A))$ is a compact convex set.

Recall that for $n=1$ the numerical range $\W(a)$ is given by \eqref{numra}.  Hence, it is convex set if and only if $a\in\R$.  Observe that 
\begin{equation*}
\begin{aligned}
&\rm{co}(\W(a))=\{\x=(x_1,x_2,x_3,x_4)^\top\in\R^4:\\
&x_1=a_1, \|(x_2,x_3,x_4)^\top\|\le  \|(a_2,a_3,a_4)^\top\|\}.
\end{aligned}
\end{equation*}

Denote by
\begin{equation}\label{defsymskew}
\begin{aligned}
&\rS_n(\F)=\{A\in\F^{n\times n}: A^\top =A\},\\
&\rA_n(\F)=\{A\in\F^{n\times n}: A^\top =-A\}.
\end{aligned}
\end{equation}
We now show that co$(\W(A))$ and $r(A)$ for $A\in\H^{n\times n}$ have some similar characterizations to $\W(A)$ and $r(A)$ for $A\in\C^{n\times n}$ as in \cite{FL23}.  
\begin{theorem}\label{qnumrrthm}   Let $A\in\H^{n\times n}$,  and write $A=A_1+A_2 \bj$, where $A_1, A_2\in\C^{n\times n}$.  Define the following matrices of order $n$, $2n$ and $4n$ respectively.
\begin{equation}\label{defC1234}
\begin{aligned}
&A_{1}=A_{11}+A_{21}\bi, A_{l1}=E_{l}+F_{l}\bi\in \rH_n(\C),  E_{l}\in\rS_n(\R), F_{l}\in \rA_n(\R),\\
&l\in[2], S=\frac{1}{2}(A_2+A_2^\top)\in \rS_n(\C),  T=\frac{1}{2}(A_2-A_2^\top)\in \rA_n(C),\\
&S=S_1+S_2\bi, S_1,S_2\in\rS_n(\R), T=T_1+T_2\bi, T_1,T_2\in \rA_n(\R),\\
&C(A)=B_{11}+B_{21}\bi,  B_{11},B_{21}\in \rH_{2n}(\C), \\ 
&B_{11}=\begin{bmatrix}E_1+F_1\bi&T_1 +T_2\bi\\-T_1+T_2\bi&E_1-F_1\bi\end{bmatrix}
,B_{21}=\begin{bmatrix}E_2+F_2\bi&S_2-S_1\bi\\S_2+S_1\bi&E_2-F_2\bi\end{bmatrix},\\
&C(-A\bj)=B_{12}+B_{22}, B_{12}\in\rS_n(\C), B_{22}\in\rA_n(\C),\\
&B_{12}=\begin{bmatrix}S_1+S_2\bi&F_2-E_2\bi\\-F_2-E_2\bi&S_1-S_2\bi\end{bmatrix},\\
&C_1=\begin{bmatrix}E_1&T_1&-F_1&-T_2\\-T_1&E_1&-T_2&F_1\\F_1&T_2&E_1&T_1\\T_2&-F_1&-T_1&E_1\end{bmatrix},C_2=\begin{bmatrix}E_2&S_2&-F_2&S_1\\S_2&E_2&-S_1&F_2\\F_2&-S_1&E_2&S_2\\S_1&-F_2&S_2&E_2\end{bmatrix},\\
&C_3=\begin{bmatrix}S_1&F_2&S_2&-E_2\\-F_2&S_1&-E_2&-S_2\\S_2&-E_2&-S_1&-F_2\\-E_2&-S_2&F_2&-S_1\end{bmatrix},C_4=\begin{bmatrix}S_2&-E_2&-S_1&-F_2\\-E_2&-S_2&F_2&-S_1\\-S_1&-F_2&-S_2&E_2\\F_2&-S_1&E_2&S_2\end{bmatrix}.
\end{aligned}
\end{equation}
Then
\begin{enumerate}[(a)]
\item The matrices $C_1,C_2,C_3,C_4$ of order $4n$ are real symmetric.
\item  Let 
$$t=t_1+t_2\bi+t_3\bj+t_4\bk, \,\bt=(t_1,t_2,t_3,t_4)^\top\in\R^4, \,\|\bt\|=1$$
be fixed.
The two supporting hyperplanes of $\rm{co}(\W(C))\subset \H$  of the form
$\Re\bar t  w=Const$ are 
\begin{equation*}
\Re\bar t  w=\lambda_{\min}(\sum_{l=1}^4 t_l C_l), \quad \Re\bar t  w=\lambda_{\max}(\sum_{l=1}^4 t_l C_l).
\end{equation*}
That is, every $w=w_1+w_2\bi+w_3\bj+w_4\bk\in \rm{co}(\W(A))$ satisfies the sharp inequalities
\begin{equation}\label{sprangeinq}
\lambda_{\min}(\sum_{l=1}^4 t_lC_l)\le \Re \bar t w\le \lambda_{\max}(\sum_{l=1}^4 t_lC_l).
\end{equation}
\item The numerical radius of $A$ is given by the formula
\begin{equation}\label{numrform}
\begin{aligned}
r(A)=\max_{\|\bt\|=1}\lambda_{\max}(\sum_{l=1}^4 t_l C_l)=\max_{\|\bt\|\le 1}\lambda_{\max}(\sum_{l=1}^4 t_l C_l).
\end{aligned}
\end{equation}
\end{enumerate}
\end{theorem}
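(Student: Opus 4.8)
The three parts all follow from one explicit formula for $\x^*A\x$, so the plan is to establish that formula first and then read off (a)--(c). Write $\x=\x_1+\x_2\bj\in\H^n$ with $\x_1,\x_2\in\C^n$, set $\hat\x=\begin{bmatrix}\x_1\\ -\bar\x_2\end{bmatrix}\in\C^{2n}$, and let $\tilde\x\in\R^{4n}$ be the real vector whose first $2n$ coordinates are the real parts, and whose last $2n$ coordinates the imaginary parts, of the entries of $\hat\x$. The first goal is to prove that for every such $\x$,
\begin{equation*}
\x^*A\x=\tilde\x^\top C_1\tilde\x+(\tilde\x^\top C_2\tilde\x)\,\bi+(\tilde\x^\top C_3\tilde\x)\,\bj+(\tilde\x^\top C_4\tilde\x)\,\bk ,
\end{equation*}
and moreover that $\|\tilde\x\|=\|\hat\x\|=\|\x\|$. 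Since $\x\mapsto\hat\x\mapsto\tilde\x$ is clearly an $\R$-linear bijection $\H^n\to\R^{4n}$, this norm identity makes it an isometry, hence a bijection of unit spheres; that is the only global feature of the correspondence $\x\leftrightarrow\tilde\x$ used afterwards.

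To obtain the formula I would first use the product and adjoint rules of $\S$\ref{sec:quatmat} to write $A\x=(A_1\x_1-A_2\bar\x_2)+(A_1\x_2+A_2\bar\x_1)\bj$ and $\x^*=\x_1^*-\x_2^\top\bj$, then expand $\x^*A\x=\rP_1(\x^*A\x)+\rP_2(\x^*A\x)\,\bj$. A short computation gives
\begin{equation*}
\rP_1(\x^*A\x)=\hat\x^*\,C(A)\,\hat\x ,\qquad \rP_2(\x^*A\x)=\overline{\hat\x}^\top\,C(-A\bj)\,\overline{\hat\x} .
\end{equation*}
The first is a Hermitian form in $\hat\x$, so only the Hermitian part of $C(A)$ contributes; the second is a complex bilinear form, so only the symmetric part of $C(-A\bj)$ contributes. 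These parts are precisely the matrices $B_{11},B_{21}$ (from $C(A)=B_{11}+B_{21}\bi$, both Hermitian) and $B_{12}$ (from $C(-A\bj)=B_{12}+B_{22}$, $B_{12}$ symmetric, $B_{22}$ antisymmetric) whose explicit block forms are recorded in \eqref{defC1234}. Splitting $\hat\x$ into real and imaginary parts and expanding $\hat\x^*B_{11}\hat\x$, $\hat\x^*B_{21}\hat\x$ and $\overline{\hat\x}^\top B_{12}\overline{\hat\x}$ into their real and imaginary components yields four real quadratic forms in $\tilde\x$, and matching their (unique) symmetric matrices block by block against \eqref{defC1234} identifies them with $C_1,C_2,C_3,C_4$. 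The norm identity is immediate from $\|\hat\x\|^2=\|\x_1\|^2+\|\x_2\|^2=\x^*\x=\|\x\|^2$. Part (a) then drops out — each $C_l$ is the symmetric matrix of a real quadratic form — and in any case is a direct check on the block patterns of \eqref{defC1234} using $E_l^\top=E_l$, $F_l^\top=-F_l$, $S_l^\top=S_l$, $T_l^\top=-T_l$.

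Granting the formula, parts (b) and (c) are the Ando--Toeplitz--Hausdorff argument carried over to $\R^4\cong\H$. For $t=t_1+t_2\bi+t_3\bj+t_4\bk$ with $\bt=(t_1,t_2,t_3,t_4)^\top$ one has $\Re(\bar t\,w)=\sum_{l=1}^4 t_l w_l$ for every $w=w_1+w_2\bi+w_3\bj+w_4\bk\in\H$, so by the formula $\Re(\bar t\,\x^*A\x)=\tilde\x^\top\big(\sum_{l=1}^4 t_l C_l\big)\tilde\x$ for a unit $\x$. As $\x$ runs over the unit sphere of $\H^n$, $\tilde\x$ runs over the full unit sphere of $\R^{4n}$, so the Rayleigh characterization gives $\max_{w\in\W(A)}\Re(\bar t\,w)=\lambda_{\max}(\sum_l t_l C_l)$ and $\min_{w\in\W(A)}\Re(\bar t\,w)=\lambda_{\min}(\sum_l t_l C_l)$; passing to the convex hull leaves the range of a linear functional unchanged, so these are the two supporting hyperplanes of the compact convex set $\mathrm{co}(\W(A))$ with normal $\bt$, which gives (b) and the sharp bounds \eqref{sprangeinq}. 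For (c), $|w|=\max_{\|\bt\|=1}\Re(\bar t\,w)$ for every $w\in\H$, hence $r(A)=\max_{\|\x\|=1}|\x^*A\x|=\max_{\|\bt\|=1}\max_{\|\x\|=1}\Re(\bar t\,\x^*A\x)=\max_{\|\bt\|=1}\lambda_{\max}(\sum_l t_l C_l)$; the $\|\bt\|\le1$ version follows because $\bt\mapsto\lambda_{\max}(\sum_l t_l C_l)$ is positively homogeneous of degree one and $r(A)\ge0$, so no $\bt$ in the interior of the ball can beat the sphere.

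The main obstacle is the bookkeeping in the first two paragraphs: one must push the conjugation twist $\x_2\mapsto-\bar\x_2$ consistently through the noncommutative product $\x^*A\x$, and recognize that the $\bj$-component pairs with the \emph{bilinear} — not sesquilinear — form of $C(-A\bj)$, so that its antisymmetric part $B_{22}$ contributes nothing. Once the right complex vector $\hat\x$ is identified, matching the expansions against the explicit blocks $C_1,\dots,C_4$ of \eqref{defC1234} is a finite verification, and everything downstream is formal.
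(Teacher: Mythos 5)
Your proposal is correct and, after unwinding the change of variable $\x=\bar\z$ (so that $\x^*A\x=\z^\top A\bar\z$ and your $\hat\x=\begin{bmatrix}\x_1\\-\bar\x_2\end{bmatrix}$ is the conjugate of the paper's $\begin{bmatrix}\bu\\\bv\end{bmatrix}$, with $\tilde\x$ matching the paper's $(\bs^\top,\bt^\top)^\top$), your key formula $\x^*A\x=\sum_l(\tilde\x^\top C_l\tilde\x)\,\cdot$(basis element) is exactly the paper's identity \eqref{z*Czfirm}, and the Rayleigh/convex-support-function argument for (b) and the homogeneity argument for (c) are the same. The one cosmetic slip is the phrase ``only the Hermitian part of $C(A)$ contributes'': both $B_{11}$ and $B_{21}$ are Hermitian and both contribute (they are the real and imaginary Hermitian components of $C(A)$, not a Hermitian/skew split), but you use them correctly in the next clause, so this does not affect the proof.
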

\begin{proof} (a) From the definitions of $E_l,F_i, S_l,T_l$ it follows straightforward that $C_1,C_2,C_3,C_4\in\rS_{4n}(\R)$. 

\noindent
(b) Let  $\z\in \H^n$ and write $\z=\bu+\bv\bj, \bu,\bv \in \C^n$.  Note that $\|\z\|^2=\|\bu\|^2+\|\bv\|^2$. 
Then 
\begin{equation*}
\z^\top A=(\bu^\top+\bv^\top\bj)(A_1+A_2\bj)=(\bu^\top A_1-\bv^\top \bar A_2)+(\bu^\top A_2  +\bv^\top \bar A_1)\bj,
\end{equation*}
and
\begin{equation}\label{x*Axiden}
\begin{aligned}
&\z^\top A \bar \z=\big((\bu^\top A_1-\bv^\top \bar A_2)+(\bu^\top A_2  +\bv^\top \bar A_1)\bj\big)(\bar \bu-\bv\bj\big)=\\
&(\bu^\top A_1-\bv^\top \bar A_2)\bar \bu+(\bu^\top A_2+\bv^\top \bar A_1)\bar\bv+\\
&\big((\bu^\top A_2+\bv^\top \bar A_1)\bu+(-\bu^\top\ A_1+\bv^\top \bar A_2)\bv\big)\bj=\\
&[\bu^\top\bv^\top]C(A)\begin{bmatrix}\bar \bu\\\bar\bv\end{bmatrix} + [\bu^\top\bv^\top]C(-A\bj)\begin{bmatrix}\bu\\\bv\end{bmatrix}\bj
\end{aligned}
\end{equation}

Observe that
\begin{equation*}
\begin{aligned}
&[\bu^\top \bv^\top ]C(A)\begin{bmatrix}\bar \bu\\\bar\bv\end{bmatrix}=
[\bu^\top\bv^\top]\big(B_{11}+ B_{21}\bi\big)\begin{bmatrix}\bar\bu\\\bar\bv\end{bmatrix},\\
&[\bu^\top\bv^\top]C(-A\bj)\begin{bmatrix}\bu\\\bv\end{bmatrix}=[\bu^\top\bv^\top](B_{12}+B_{22}\b) \begin{bmatrix}\bu\\\bv\end{bmatrix}=
[\bu^\top\bv^\top] B_{21} \begin{bmatrix}\bu\\\bv\end{bmatrix}.
\end{aligned}
\end{equation*}
Set $\begin{bmatrix}\bu\\\bv\end{bmatrix}=\bs -\bt\bi\in\C^{2n}$,  where $\bs,\bt\in\R^{2n}$.
Hence
\begin{equation}\label{z*Czfirm}
\begin{aligned}
\z^\top A\bar \z=[\bs^\top\bt^\top]\big(C_1+ C_2\bi+ C_3\bj+ C_4\bk\big)\begin{bmatrix}\bs\\\bt\end{bmatrix},
\end{aligned}
\end{equation}
and
\begin{equation}\label{Retz*Czid}
\Re\bar t \z^\top C\bar \z=[\bs^\top\bt^\top]\big(\sum_{l=1}^4 t_lC_l\big)\begin{bmatrix}\bs\\\bt\end{bmatrix}.
\end{equation}
The above equality yields \eqref{sprangeinq}.

\noindent (c) Taking maximum and minimum on $[\bs^\top\bt^\top]$ with norm one in 
\eqref{Retz*Czid} we deduce the sharp inequalities and $t, |t|=1$ we obtain the characterization \eqref{numrform}.
\end{proof}
\begin{proposition}\label{propqradprop} The numerical radius is an $\R$-norm on $\rH^{n\times n}$. 
Furthermore, for $A\in\H^{n\times n}$ and $U\in\rU_n(\H)$ the following conditions hold:
\begin{enumerate}[(a)]
\item $\W(U^*AU)=\W(A)$ and $r(U^*AU)=r(A)$.
\item $\W(A^*)=\overline{\W(A)}$ and $r(A^*)=r(A)$.
\item The following sharp inequalities hold
\begin{equation}\label{qradnormin}
\frac{1}{2}\|A\|_\infty \le r(A)\le \|A\|_{\infty}.
\end{equation}
\end{enumerate}
\end{proposition}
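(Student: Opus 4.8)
The plan is to verify each of the three claims of Proposition \ref{propqradprop} in turn, treating (a) and (b) as straightforward consequences of the definition \eqref{defWArA} and leaving the sharp inequalities \eqref{qradnormin} as the substantive part.

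\textbf{The $\R$-norm property and parts (a), (b).} First I would check the three axioms of Definition \ref{defnorm} for $r(\cdot)$. Positivity and the implication $r(A)=0\iff A=0$ follow because $r(A)\ge \frac12\|A\|_\infty$ once \eqref{qradnormin} is in hand (or, more directly, because $\x^*A\x=0$ for all unit $\x$ forces $A=0$: polarizing $\x^*A\x$ over $\H^n$ recovers all entries of $A$). The triangle inequality is immediate from $|\x^*(A+B)\x|\le |\x^*A\x|+|\x^*B\x|$ and taking the max over unit $\x$. For $\R$-homogeneity, $|\x^*(aA)\x|=|a|\,|\x^*A\x|$ for $a\in\R$; note $r$ is \emph{not} an $\H$-norm in general since left/right multiplication by a non-real quaternion need not preserve $r$, which is why only the $\R$-norm claim is made. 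For (a): if $\|\x\|=1$ then $\y=U\x$ also has $\|\y\|=1$ since $U$ is unitary, and $\y^*AU^{-1}\cdot$ ... more precisely $\x^*(U^*AU)\x=(U\x)^*A(U\x)=\y^*A\y$, and as $\x$ ranges over the unit sphere so does $\y$; hence $\W(U^*AU)=\W(A)$, and taking absolute values and maxima gives $r(U^*AU)=r(A)$. For (b): $\overline{\x^*A\x}=\x^*A^*\x$ by the conjugation rule $\overline{ab}=\bar b\bar a$ applied to the scalar $\x^*A\x$ (together with $(\x^*A\x)^*=\x^*A^*\x$ since it is $1\times 1$), so $\W(A^*)=\overline{\W(A)}$, and $|\bar q|=|q|$ gives $r(A^*)=r(A)$.

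\textbf{Part (c): the sharp inequalities.} The upper bound $r(A)\le \|A\|_\infty=\sigma_1(A)$ follows from \eqref{21intynrm}: for any unit $\x$, $|\x^*A\x|=|\langle\x,A\x\rangle|\le \|A\|_\infty\|\x\|^2=\|A\|_\infty$ by Cauchy--Schwarz and the definition of the spectral norm as $\max\{|\langle\bw,A\z\rangle|:\|\bw\|=\|\z\|=1\}$. Sharpness of the upper bound is attained, e.g., for $A$ a nonzero real scalar multiple of the identity (or any selfadjoint $A$, where $r(A)=\|A\|_\infty$ via the Rayleigh quotient characterization \eqref{eigsa}). For the lower bound $\frac12\|A\|_\infty\le r(A)$, I would pick a unit singular pair: by the SVD \eqref{SVDdec}, choose unit vectors $\bw,\z\in\H^n$ with $A\z=\sigma_1(A)\bw$ and $\bw^*A\z=\sigma_1(A)=\|A\|_\infty$. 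Consider the two-dimensional real combination $\x=\z c+\bw d$ with $c,d\in\H$; the standard trick is to use the two test vectors $\x_\pm=\tfrac{1}{\sqrt2}(\z\pm\bw)$ (after first replacing $\bw$ by $\bw$ times a unit quaternion to make $\z$ and $\bw$ orthogonal, or directly if they already are). Then $\x_\pm^*A\x_\pm=\tfrac12(\z^*A\z\pm\z^*A\bw\pm\bw^*A\z+\bw^*A\bw)$, and subtracting, $\x_+^*A\x_+-\x_-^*A\x_- = \z^*A\bw+\bw^*A\z$. Taking real parts and using $r(A)\ge|\x_\pm^*A\x_\pm|\ge|\Re(\bar t\,\x_\pm^*A\x_\pm)|$ for a suitable unit $t\in\H$ chosen to align phases, I get $2r(A)\ge |\Re(\bar t(\z^*A\bw+\bw^*A\z))|$, and one can choose $t$ and the earlier unit-quaternion adjustment of $\bw$ so that this lower bound equals $\bw^*A\z=\sigma_1(A)$ — giving $\frac12\|A\|_\infty\le r(A)$. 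Sharpness of the lower bound: the nilpotent $2\times2$ example $A=\begin{bmatrix}0&1\\0&0\end{bmatrix}$ (already over $\C\subset\H$) has $\|A\|_\infty=1$ and $r(A)=\tfrac12$, as is classical.

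\textbf{Main obstacle.} I expect the genuinely nontrivial point to be the lower bound $\tfrac12\|A\|_\infty\le r(A)$ over $\H$: over $\C$ the phase alignment is a single rotation $e^{i\theta}$, but over $\H$ the quantity $\z^*A\bw+\bw^*A\z$ is a general quaternion, and one must carefully exploit the freedom to replace $\bw$ by $\bw q$ with $|q|=1$ (which keeps $\|\bw\|=1$ and $\bw^*A\z$ multiplied by $q$ on the right, hence preserves $|\bw^*A\z|=\sigma_1$) together with the freedom to pick the unit quaternion $t$ in $\Re(\bar t\,\cdot)$, to force the cross term to have real part equal to its modulus $\sigma_1(A)$. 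Writing $\z^*A\z+\bw^*A\bw=:\mu$ and noting $|\x_\pm^*A\x_\pm|=\tfrac12|\mu\pm(\z^*A\bw+\bw^*A\z)|$, one wants a choice making the two moduli average to at least $\sigma_1/2$; the parallelogram-type identity $|\mu+\nu|^2+|\mu-\nu|^2=2|\mu|^2+2|\nu|^2\ge 2|\nu|^2$ with $\nu=\z^*A\bw+\bw^*A\z$ shows $\max(|\mu+\nu|,|\mu-\nu|)\ge \sqrt2\,|\nu|$, so it suffices to arrange $|\nu|=|\z^*A\bw+\bw^*A\z|\ge \sigma_1(A)$, which the right-multiplication adjustment of $\bw$ by a unit quaternion $q$ does: choose $q$ so that $\bw^*A\z$ and $\overline{\z^*A\bw}=\bw^*A^*\z$... — this is the one computation I would actually carry out in full, and it is where all the quaternionic care resides. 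Once $|\nu|\ge\sigma_1(A)$ is secured, $r(A)\ge\tfrac12\max(|\mu+\nu|,|\mu-\nu|)\ge \tfrac{1}{\sqrt2}\sigma_1(A)$, which is already stronger than claimed; to get exactly the factor $\tfrac12$ with sharpness one uses instead the cleaner estimate $r(A)\ge \tfrac12|\nu|$ directly from $2r(A)\ge |\x_+^*A\x_+ - \x_-^*A\x_-| = |\nu|\ge\sigma_1(A)$ after the phase adjustment, and the nilpotent example shows $\tfrac12$ cannot be improved.
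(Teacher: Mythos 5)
Your treatment of the $\R$-norm property, parts (a), (b), and the upper bound $r(A)\le\|A\|_\infty$ matches the paper's proof, and your polarization remark for definiteness is a sound alternative to the paper's use of \eqref{x*Axiden}. The substantive divergence is in the lower bound $\tfrac12\|A\|_\infty\le r(A)$, and there the proposal has a genuine gap that your own ``main obstacle'' discussion does not resolve.

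First, the orthogonalization step fails: if $\z^*\bw\ne 0$ you cannot make $\z$ and $\bw q$ orthogonal by any unit quaternion $q$, since $|\z^*(\bw q)|=|\z^*\bw|\,|q|=|\z^*\bw|$. Thus the test vectors $\x_\pm=\tfrac{1}{\sqrt2}(\z\pm\bw q)$ need not be unit vectors (and $\x_-$ can even vanish, e.g.\ when $A=I$), so the inequality $2r(A)\ge|\x_+^*A\x_+-\x_-^*A\x_-|$ is not available as written. Second, even granting orthogonality, the phase-alignment claim $\max_{|q|=1}\bigl|\z^*A(\bw q)+\overline{q}\,\bw^*A\z\bigr|\ge\sigma_1(A)$ is asserted but not proved; writing $a=\z^*A\bw$, $b=\bw^*A\z$ (so $|b|=\sigma_1$), the quantity is $aq+\bar q\,b$, and noncommutativity makes it unclear that its modulus can always be pushed up to $|b|$ by a choice of $q$. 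These are the two places where the quaternionic care you flag is actually required, and they are not filled in.

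The paper avoids both issues entirely by a projection argument rather than a test-vector argument. Writing $\z=\bu+\bv\bj$, identity \eqref{x*Axiden} expresses $\z^\top A\bar\z$ as a sum of a complex part $\y^*C(A)\y$ (with $\y=\begin{bmatrix}\bar\bu\\ \bar\bv\end{bmatrix}\in\C^{2n}$, $\|\y\|=\|\z\|$) and a $\bj$-multiple of another complex quantity, and these two summands are orthogonal in $\H\cong\R^4$. Hence $|\z^\top A\bar\z|\ge|\y^*C(A)\y|$. Since every unit $\y\in\C^{2n}$ arises from some unit $\z\in\H^n$, taking suprema gives $r(A)\ge r(C(A))$, and the classical complex inequality $r(C(A))\ge\tfrac12\|C(A)\|_\infty$ together with $\|C(A)\|_\infty=\sigma_1(C(A))=\sigma_1(A)$ from \eqref{svalhatA} yields $r(A)\ge\tfrac12\|A\|_\infty$. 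This reduction to the complex case is what your route lacks; if you want to keep an elementary self-contained argument, the Hermitian-decomposition proof over $\C$ applied to $C(A)$ (rather than a singular-vector construction over $\H$) is the robust way to go.
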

\begin{proof}
Assume that $\z^\top A\bar \z=0$ for all $\z$ of norm one.    The equalities \eqref{x*Axiden} yield $C(A)=0$.  Hence $A=0$. Clearly, for $a\in\R$ one has the equality $r(Aa)=|a|r(A)$.   The maximal characterization of $r(A)$ \eqref{defWArA} yields
$r(A+B)\le r(A)+r(B)$.   Hence, $r(\cdot)$ is and $\R$-norm.
 
 \noindent (a)
Clearly, $\x^*(U^* A U)\x=(U\x)^* A (U\x)$.  Hence,  $\W(U^*AU)=\W(A)$ and $r(U^*AU)=r(A)$.

\noindent (b)
Observe that $\overline{\x^* A\x}= \x^* A^* \x$.  Hence $\W(A^*)=\overline{ \W(A)}$
and $r(A)=r(A^*)$.   

\noindent (c)  Recall that the sharp inequality \eqref{qradnormin} holds for $A\in\C^{n\times n}$ \cite[(5.7.23)]{HJ13}.  The characterizations of $\|A\|_{\infty}=\sigma_1(A)$ \eqref{21intynrm} and $r(A)$ and \eqref{defWArA} yield the inequality $r(A)\le \|A\|_{\infty}$.   The equialities \eqref{svalhatA} imply that $\sigma_1(A)=\sigma_1(C(A))$.  The equalities \eqref{x*Axiden} yield 
\begin{equation*}
\begin{aligned}
&|\z^\top A\z|=\sqrt{|[\bu^\top\bv^\top]C(A)\begin{bmatrix}\bar \bu\\\bar\bv\end{bmatrix}|^2 + |[\bu^\top\bv^\top]C(-A\bj)\begin{bmatrix}\bu\\\bv\end{bmatrix}|^2}\Rightarrow \\
&r(A)\ge r(C(A))\ge\frac{1}{2} \sigma_1(C(A))=\frac{1}{2}\|A\|_{\infty}.
\end{aligned}
\end{equation*}
\end{proof}

Recall that the numerical range of a normal complex  matrix is a convex hull of its eigenvalues.  The corresponding result for quaternionic normal matrices is:
\begin{proposition}\label{qrnmat}  
\begin{enumerate}[(a)]
\item Assume that $A\in \H^{n\times n}$ is normal, with the eigenvalues $\lambda_1,\ldots,\lambda_n\in\C$.  Then $\W(A)$ is a union of convex combinations of $q_1,\ldots,q_n$, where $q_i\in \W(\lambda_i)$ for $i\in[n]$.  Hence, $\rm{co}(\W(A))$ is a convex hull of $\cup_{i=1}^n \W(\lambda_i)$.
\item Assume that $A\in\rH_n(\H)$.  Then $\W(A)$ is an interval $[\lambda_{\min}(A),\lambda_{\max}(A)]$.  Furthermore,
\begin{equation}\label{SAxid}
\bar t \x^* A\x t= |t|^2 \x^* A\x, \quad \x\in\H^n, t\in\H. 
\end{equation}
\end{enumerate}
\end{proposition}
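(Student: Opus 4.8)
The plan is to reduce everything to the spectral decomposition \eqref{specnorm} together with the $n=1$ description of the numerical range in \eqref{numra}. For part (a), write $A=\sum_{s=1}^n \bu_s\lambda_s\bu_s^*$ with $\bu_1,\ldots,\bu_n$ orthonormal and $\lambda_s\in\C$, $\Im\lambda_s\ge 0$. Since $\bu_1,\ldots,\bu_n$ is an orthonormal basis of $\H^n$ as a right module, any unit vector $\x$ can be written as $\x=\sum_s \bu_s q_s$ with $q_s\in\H$ and $\sum_s |q_s|^2=1$. Then $\x^*A\x=\sum_s \bar q_s\,\lambda_s\, q_s$, i.e.\ $\x^*A\x$ is a sum $\sum_s |q_s|^2 (\bar{q}_s/|q_s|)\lambda_s (q_s/|q_s|)$ over the nonzero $q_s$'s, which by \eqref{numra} exhibits $\x^*A\x$ as a convex combination of points $q_s'\in\W(\lambda_s)$. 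Conversely, given any choice of $q_s'\in\W(\lambda_s)$, say $q_s'=\bar p_s\lambda_s p_s$ with $|p_s|=1$, and any probability vector $(\alpha_1,\ldots,\alpha_n)$, the unit vector $\x=\sum_s \bu_s (p_s\sqrt{\alpha_s})$ satisfies $\x^*A\x=\sum_s \alpha_s q_s'$. This gives the claimed equality, and taking convex hulls yields $\mathrm{co}(\W(A))=\mathrm{co}\big(\bigcup_{i=1}^n \W(\lambda_i)\big)$ since the convex hull of a union of the sets $\W(\lambda_i)$ is exactly the set of all convex combinations with one representative drawn from each (after absorbing zero weights).

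For part (b), a self-adjoint $A$ is the special case of (a) with $\lambda_s\in\R$, so $\W(\lambda_s)=\{\lambda_s\}$ and part (a) immediately gives that $\W(A)$ is the set of convex combinations of $\lambda_1,\ldots,\lambda_n$, i.e.\ the interval $[\lambda_{\min}(A),\lambda_{\max}(A)]$; continuity/connectedness of $\x\mapsto \x^*A\x$ on the unit sphere of $\H^n$ also gives this directly via the Rayleigh-quotient extremes recalled in \eqref{eigsa} and the surrounding text. The identity \eqref{SAxid} is then a one-line computation: $\x^*A\x\in\R$ for self-adjoint $A$ (since $\overline{\x^*A\x}=\x^*A^*\x=\x^*A\x$), and a real scalar $c$ commutes with every quaternion, so $\bar t(\x^*A\x)t = (\x^*A\x)\,\bar t t = |t|^2\,\x^*A\x$.

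The only genuine subtlety — the step I expect to need the most care — is the bookkeeping in part (a) around the non-real scalars $q_s$: one must be careful that $\bar q_s\lambda_s q_s$ really is $|q_s|^2$ times a point of $\W(\lambda_s)$, using $q_s^{-1}=|q_s|^{-2}\bar q_s$ to write $\bar q_s\lambda_s q_s = |q_s|^2\,\overline{(q_s/|q_s|)}\,\lambda_s\,(q_s/|q_s|)$, and that the orthonormality of the $\bu_s$ is what makes the cross terms in $\x^*A\x$ vanish and makes $\sum_s|q_s|^2=\|\x\|^2$. Once this is set up, both the forward inclusion (every $\x^*A\x$ is such a convex combination) and the reverse (every such convex combination is attained) are routine, and part (b) follows with essentially no further work.
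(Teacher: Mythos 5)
Your proof is correct and follows essentially the same route as the paper: both use the spectral decomposition \eqref{specnorm} to express $\x^*A\x=\sum_s\bar q_s\lambda_s q_s$ as a convex combination of points of $\W(\lambda_s)$, with the reverse inclusion by explicit choice of $\x$, and part (b) as the real-eigenvalue special case plus the observation that $\x^*A\x\in\R$ commutes with quaternions. The only cosmetic difference is that the paper first reduces to diagonal $A$ via the unitary invariance in Proposition~\ref{propqradprop}(a), whereas you work directly in the orthonormal eigenbasis.
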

\begin{proof} (a)  In view of part (a) of Proposition \ref{propqradprop} we can assume that $A=\diag(\lambda_1,\ldots,\lambda_n)$.  Let $\x=(x_1,\ldots,x_n)^\top=(t_1|x_1|,\ldots,t_n|x_n|)^\top$, where $t_i\in\H, |t_i|=1,i\in[n]$ and $\sum_{i=1}|x_i|^2=1$.
Clearly, $q_i=\bar t_i \lambda t_i\in \W(\lambda_i)$ for $i\in[n]$.  
Then $\x^* A\x=\sum_{i=1}^n |x_i|^2 \bar t_i\lambda_i t_i$ is a convex combination $q_1,\ldots,q_n$.  Vice versa,  any convex combination of $q_i\in \W(\lambda_i)$ for $i\in[n]$ is of the form $\x^* A\x$ for a corresponding $\x\in\H^n, \|\x\|=1$.
Clearly, co$(\W(A))$ is the convex hull of $\cup_{i=1}^n \W(\lambda_i)$.

\noindent
(b) Recall that if $A\in\rH_n(\H)$ then $A$ is normal with $\lambda_i\in\R$.  Hence $\x^*A\x$ is a convex combination of $\lambda_1,\ldots\lambda_n$, and $\W(A)=[\lambda_{\min}(A),\lambda_{\max}(A)]$.  As $\x^*A\x\in\R$ we deduce \eqref{SAxid}.
\end{proof}
\section{The SDP characterizations of qradius and its dual norm}\label{sec:SDPchar}
\subsection{Characterizations of $r^\vee(\cdot)$}\label{subsec:charrvee}
The definition of the dual norm \ref{defdnrm},  and the fact the the dual of the dual norm is the original norm, yields the following characterizations of the dual norm of the qradius and the norm qradius:
\begin{equation}\label{defrvee}
\begin{aligned}
&r^\vee(C)=\max_{r(A)\le 1}\Re\tr A^* C, \textrm{ for } C\in\H^{n\times n},\\
&r(A)=\max_{r^\vee(C)\le 1} \Re C^* A.
\end{aligned}
\end{equation}
\begin{proposition}\label{rveeub}  
The set of the extreme points of the unit ball of the $\R$-norm  $r^\vee(\cdot)$ on $\H^{n\times n}$ is 
\begin{equation}\label{extpt}
\cE=\{\z  t\z^*, \z\in\H^n: \|\z\|=1, t\in \H, |t|=1\}.
\end{equation}
Hence, The norm $r^\vee(\cdot)$ is invariant under the unitary similarity.
\end{proposition}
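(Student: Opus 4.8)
The plan is to identify the unit ball of $r^\vee$ with the closed convex hull of $\cE$ and then determine its extreme points. The key preliminary identity is that, for $\z\in\H^n$ and $t\in\H$,
\[
\Re\big(\bar t\,\z^*A\z\big)=\Re\tr\!\big((\z t\z^*)^*A\big)=\langle \z t\z^*,A\rangle_{\R},
\]
which follows from $(\z t\z^*)^*=\z\bar t\z^*$, from $\Re\tr FG=\Re\tr GF$, and from the fact that $\bar t\z^*A\z$ is a $1\times1$ block. Combined with \eqref{defWArA} this yields, for every $A\in\H^{n\times n}$,
\[
r(A)=\max_{B\in\cE}\langle B,A\rangle_{\R}=\max_{B\in K}\langle B,A\rangle_{\R},\qquad K:=\mathrm{co}(\cE),
\]
so $r$ is the support function of $K$. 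Here $\cE$ is compact, being a continuous image of the product of the unit spheres of $\H^n$ and of $\H$, and symmetric, since $-\z t\z^*=\z(-t)\z^*$; hence $K$ is compact (the convex hull of a compact set in the finite-dimensional space $\H^{n\times n}$), convex and symmetric, and it is full-dimensional because $r$ is a positive definite $\R$-norm (Proposition \ref{propqradprop}). Using $r(A)\le1\iff A\in K^{\circ}$ and the symmetry of $\langle\cdot,\cdot\rangle_{\R}$, the definition \eqref{defrvee} of $r^\vee$ gives $r^\vee(C)=\max_{A\in K^\circ}\langle A,C\rangle_{\R}$, which is the support function of $K^{\circ}$ and hence the gauge of $K^{\circ\circ}=K$ by the bipolar theorem. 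Therefore the unit ball of $r^\vee$ is exactly $K=\mathrm{co}(\cE)$.

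Next I would compute $\mathrm{ext}\,K$. The inclusion $\mathrm{ext}\,K\subseteq\cE$ is elementary: each point of $K$ is a finite convex combination $\sum_i\lambda_iB_i$ with $B_i\in\cE$ and $\lambda_i>0$, and if it is extreme, separating off one summand writes it as a proper convex combination of $B_1$ and the renormalized remainder, forcing it to equal $B_1\in\cE$. For the reverse inclusion I would prove that every $B_0=\z t\z^*\in\cE$ is an exposed point of $K$, exposed by $A_0:=B_0$. Using the identity above with $B_1=\z_1t_1\z_1^*\in\cE$ and $a:=\z^*\z_1$, one gets $\langle A_0,B_1\rangle_{\R}=\Re(\bar a\,\bar t\,a\,t_1)$; in particular $\langle A_0,B_0\rangle_{\R}=\Re(\bar t t)=1$, while $r(A_0)=r(\z t\z^*)=\max_{\|\x\|=1}|\x^*\z|^2=1$. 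Since $\langle A_0,B_1\rangle_{\R}\le r(A_0)=1$ for each $B_1\in\cE$, a point $B=\sum_i\lambda_iB_i\in K$ achieves $\langle A_0,B\rangle_{\R}=1$ only if every summand $B_i\in\cE$ does. But a maximizing $B_1\in\cE$ satisfies
\[
1=\Re\big(\bar a\,\bar t\,a\,t_1\big)\le |a|^2\le\|\z\|^2\|\z_1\|^2=1,
\]
so $|a|=1$, whence the equality case of the Cauchy--Schwarz inequality of Subsection \ref{subsec:ipGS} forces $\z_1=\z c$ with $|c|=1$, so $a=c$; then $\bar c\,\bar t\,c\,t_1=1$ forces $t_1=\bar c\,t\,c$ and $B_1=(\z c)(\bar c\,t\,c)(\z c)^*=\z t\z^*=B_0$. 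Hence $B_0$ is the unique maximizer of $\langle A_0,\cdot\rangle_{\R}$ over $K$, i.e.\ an exposed point, so $\cE\subseteq\mathrm{ext}\,K$ and therefore $\mathrm{ext}\,K=\cE$, which is \eqref{extpt}.

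Finally, for the invariance under unitary similarity, note that for $U\in\rU_n(\H)$ and $B_0=\z t\z^*\in\cE$ one has $U^*B_0U=(U^*\z)\,t\,(U^*\z)^*$ with $\|U^*\z\|=\|\z\|=1$, so $\cE$, hence its convex hull $K$, which is the unit ball of $r^\vee$, is invariant under the linear map $C\mapsto U^*CU$; consequently $r^\vee(U^*CU)=r^\vee(C)$.

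The only step that is not formal bookkeeping is the equality analysis in the second paragraph: one must deduce from $|\z^*\z_1|=1$ together with $\bar a\,\bar t\,a\,t_1=1$ that $\z_1t_1\z_1^*=\z t\z^*$, which relies on the equality case of the quaternionic Cauchy--Schwarz inequality and on careful non-commutative manipulation of unit quaternions; this is the part I expect to be the main obstacle and would write out with care.
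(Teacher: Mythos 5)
Your proposal is correct, and your first paragraph (unit ball of $r^\vee$ equals $\mathrm{co}(\cE)$ by duality, and the trivial inclusion $\mathrm{ext}\,K\subseteq\cE$) matches the paper's approach. Where you diverge is in the reverse inclusion $\cE\subseteq\mathrm{ext}\,K$: you run a full exposed-point argument, computing $r(\z t\z^*)=1$ and carrying out an equality analysis in quaternionic Cauchy--Schwarz (which you rightly flag as the delicate step, and which I checked is correct, using $a=\z^*\z_1$, $|a|=1$, $t_1=\bar a\,t\,a$, and then $\z_1=\z c$ with $a=c$). The paper instead uses a much shorter observation: every $B=\z t\z^*\in\cE$ has $\|B\|_F^2=\tr(\z\bar t\z^*\z t\z^*)=\tr\z\z^*=1$, so $\cE$ lies on the Frobenius unit sphere, which is the set of extreme points of the Frobenius unit ball; since $\mathrm{co}(\cE)$ is contained in that ball and contains $\cE$, each $B\in\cE$ is automatically extreme in $\mathrm{co}(\cE)$. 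The paper's argument is therefore a slick one-liner that sidesteps the noncommutative equality analysis entirely; yours yields strictly more information (that points of $\cE$ are exposed, not merely extreme), at the cost of the Cauchy--Schwarz bookkeeping. The conclusion about unitary invariance is identical in both.
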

\begin{proof}  Observe that 
$\Re \bar t \x^*A\x=\Re \tr (\x t \x^*)^*A$.
Compare the definition of $r(A)$ in \eqref{defWArA} with the second equality in  \eqref{defrvee} to deduce that the set of the extreme points of the unit ball of the norm $r^\vee(\cdot)$ is a subset of $\cE$.  Note that $\cE$ is a subset of the unit sphere $\rK(\H^{n\times n})=\{A\in\H^{n\times n}: \|A\|_F=1\}$, which is the set of the extreme points of the unit ball in $\H^{n\times n}$ with respect ot the norm $\|\cdot \|_F$.  Hence, $\cE$ is the set of the extreme points of the unit ball of the norm $r^\vee(\cdot)$.

Clearly $U\cE U^*=\cE$ for each $U\in\rU_n(\H)$.  Hence $r^\vee(\cdot)$ is invariant  
under the unitary similarity.
\end{proof}
\begin{theorem}\label{rveeSDPchar}  Let $Y\in \H^{n\times n}$.  Then $r^\vee(Y)\le 1$ if and only if there exists $Z=\begin{bmatrix}X&Y\\Y^*&X\end{bmatrix}\in\rH_{2n,+}(\H)$ such that $\tr X=1$.  
\end{theorem}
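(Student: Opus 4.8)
The plan is to prove the two implications separately. For the direction ``$r^\vee(Y)\le 1\Rightarrow$ a completion exists'' I would use the description of the extreme points of the unit ball of $r^\vee(\cdot)$ from Proposition \ref{rveeub} and write down the completion explicitly. For the converse I would pair a hypothetical completion against a positive semidefinite certificate coming from the semidefinite characterization of the quaternionic numerical radius, i.e. the $\H$-analog of \eqref{SDPcharCa}.

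For the forward implication, suppose $r^\vee(Y)\le 1$. Since $r^\vee(\cdot)$ is an $\R$-norm on the finite-dimensional real space $\H^{n\times n}$ (being the dual of the $\R$-norm $r(\cdot)$, Proposition \ref{propqradprop}), its closed unit ball is compact and convex, hence by Minkowski's theorem equals the convex hull of its extreme points, which by Proposition \ref{rveeub} is the set $\cE$ of \eqref{extpt}; by Carathéodory's theorem I can then write $Y=\sum_{i=1}^{N}\alpha_i\,\z_i t_i\z_i^{*}$ with $\alpha_i\ge 0$, $\sum_i\alpha_i=1$, $\|\z_i\|=1$, $t_i\in\H$, $|t_i|=1$. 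For each $i$ put $\w_i:=\z_i\bar t_i$; then $\|\w_i\|^{2}=\w_i^{*}\w_i=t_i\z_i^{*}\z_i\bar t_i=|t_i|^{2}=1$, and a block computation (using $|t_i|=1$ for the lower-right block) gives
\begin{equation*}
\begin{bmatrix}\z_i\\ \w_i\end{bmatrix}\begin{bmatrix}\z_i^{*}&\w_i^{*}\end{bmatrix}=\begin{bmatrix}\z_i\z_i^{*}& \z_i t_i\z_i^{*}\\ \z_i\bar t_i\z_i^{*}& \z_i\z_i^{*}\end{bmatrix}.
\end{equation*}
Setting $X:=\sum_i\alpha_i\,\z_i\z_i^{*}$ and $Z:=\sum_i\alpha_i\begin{bmatrix}\z_i\\ \w_i\end{bmatrix}\begin{bmatrix}\z_i^{*}&\w_i^{*}\end{bmatrix}$, I get $Z\in\rH_{2n,+}(\H)$ as a nonnegative combination of matrices of the form $\bv\bv^{*}$, $\bv\in\H^{2n}$, while $Z=\begin{bmatrix}X&Y\\ Y^{*}&X\end{bmatrix}$ by the displayed identity together with the formulas for $X$ and $Y$, and $\tr X=\sum_i\alpha_i\|\z_i\|^{2}=1$. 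This is the desired completion.

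For the converse, suppose such a $Z=\begin{bmatrix}X&Y\\ Y^{*}&X\end{bmatrix}\in\rH_{2n,+}(\H)$ with $\tr X=1$ is given. Since $r(\cdot)$ and $r^\vee(\cdot)$ are mutually dual $\R$-norms, $r^\vee(Y)\le 1$ is equivalent to $\Re\tr A^{*}Y\le r(A)$ for every $A\in\H^{n\times n}$, so I fix $A$ and aim at this inequality. Applying the $\H$-version of \eqref{SDPcharCa} to $-\tfrac{1}{2}A$, whose numerical radius equals $\tfrac{1}{2}r(A)$, I obtain $W\in\rH_n(\H)$ with
\begin{equation*}
N:=\begin{bmatrix}\tfrac{1}{2}r(A)\,I_n+W& -\tfrac{1}{2}A\\ -\tfrac{1}{2}A^{*}& \tfrac{1}{2}r(A)\,I_n-W\end{bmatrix}\in\rH_{2n,+}(\H).
\end{equation*}
Since $\rH_{2n,+}(\H)$ is a self-dual cone, $\Re\tr(NZ)\ge 0$. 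Expanding $\tr(NZ)$, the $W$-terms cancel, the diagonal blocks of $N$ contribute $r(A)\tr X=r(A)$, and the off-diagonal blocks contribute $-\tfrac{1}{2}\Re\tr(AY^{*}+A^{*}Y)=-\Re\tr A^{*}Y$ (using $\Re\tr AY^{*}=\Re\tr A^{*}Y$), so $0\le\Re\tr(NZ)=r(A)-\Re\tr A^{*}Y$, which is the claimed inequality.

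The one nontrivial input, and the step I expect to be the real obstacle, is the semidefinite characterization of the quaternionic numerical radius used in the converse, namely $r(B)=\min\{a:\begin{bmatrix}aI_n+W&B\\ B^{*}&aI_n-W\end{bmatrix}\succeq 0,\ W\in\rH_n(\H)\}$. If this has not already been recorded, it must be established first — e.g. by deducing it from the eigenvalue formula \eqref{numrform} of Theorem \ref{qnumrrthm} via a lossless reformulation, or by the conic-duality route: the support function of the set $\{Y:\exists X,\ \begin{bmatrix}X&Y\\ Y^{*}&X\end{bmatrix}\succeq 0,\ \tr X=1\}$ is the value of an SDP whose dual — strong duality holding by Theorem \ref{corSlater} with the strictly feasible point $Z=\tfrac{1}{n}I_{2n}$ — evaluates to $r(A)$, with the reverse inequality between the two support functions supplied by the forward implication above. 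Apart from that, all that remains is noncommutative bookkeeping, namely verifying the block identities over $\H$ and the repeated use of $\Re\tr FG=\Re\tr GF$ and $\Re\tr F^{*}=\Re\tr F$, which should be the only minor friction.
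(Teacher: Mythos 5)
Your forward implication is correct and is essentially the paper's argument, fleshed out: write $Y$ as a convex combination of extreme points $\z_i t_i\z_i^*$, produce a rank-one completion $\begin{bmatrix}\z_i\\\z_i\bar t_i\end{bmatrix}[\z_i^*\ t_i\z_i^*]$ for each, and take the corresponding convex combination to get $Z\succeq 0$ with $\tr X=1$. That block identity and the trace check are fine.

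The converse is where you part ways with the paper, and it is where you have a genuine gap. You invoke the $\H$-version of \eqref{SDPcharCa}, i.e.\ Theorem~\ref{SDPrAq}, to produce a PSD certificate $N$ for $-\frac12 A$ and then pair it against $Z$. But in this paper the inclusion $\mu(A)\le r(A)$ (the nontrivial half of Theorem~\ref{SDPrAq}, namely that a $W$ with $\begin{bmatrix}r(A)I_n+W&A\\A^*&r(A)I_n-W\end{bmatrix}\succeq 0$ actually exists) is \emph{proved from Theorem~\ref{rveeSDPchar}}: the paper's proof of Theorem~\ref{SDPrAq} explicitly says ``Theorem~\ref{rveeSDPchar} yields that $Z\succeq 0$ if and only if $r^\vee(2Y)\le 1$.'' So using Theorem~\ref{SDPrAq} here is circular. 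You flag this honestly, but neither of the two fallbacks you sketch closes the loop. The ``lossless reformulation'' from the eigenvalue formula \eqref{numrform} to the Ando-type SDP is precisely the hard content of Ando's theorem and cannot be waved through. The conic-duality route, if you actually run it, gives (with Slater's condition from $Z=\frac1n I_{2n}$) that the support function of $\cB=\{Y:\exists X,\ Z\succeq 0,\ \tr X=1\}$ equals the value of the dual SDP, which by direct computation is $\min\{a:\begin{bmatrix}aI_n+W&-A\\-A^*&aI_n-W\end{bmatrix}\succeq 0\}=\mu(-A)$; but showing $\mu(-A)\le r(A)$ is again exactly the half of Theorem~\ref{SDPrAq} you were trying to avoid. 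The forward implication only gives you $r(A)\le h_\cB(A)=\mu(-A)$, the easy inequality, not the one you need.

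The paper's own converse is constructive and does not touch the Ando SDP at all. It first handles $X=\frac1n I_n$: then $Z=\frac1n I_{2n}+H(Y)\succeq 0$ forces $\sigma_1(nY)\le 1$, so $nY$ is a convex combination of at most $n+1$ unitaries (extreme points of the $\ell_\infty$ ball of singular-value vectors, sandwiched by the SVD unitaries), and for a unitary $U$ the spectral decomposition $U=\sum_l\bv_l t_l\bv_l^*$ with $|t_l|=1$ exhibits $\frac1n U$ directly as a convex combination of points of $\cE$. The general $X\succeq 0$ with $\tr X=1$ is then reduced to this case by diagonalizing $X=U\Lambda U^*$, discarding the zero-eigenvalue rows/columns, and conjugating by $\diag(\Lambda^{-1/2},\Lambda^{-1/2})$, with a short trace computation $\sum_l\|\Lambda^{1/2}\bv_l\|^2=\tr\Lambda=1$ keeping the weights convex. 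If you want to keep a duality-style argument, you must first give an independent proof of the existence of the Ando certificate over $\H$; otherwise, adopt the paper's constructive route.
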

\begin{proof} Assume that $Y=\z t\z^*$, where $\z\in \H^n, \|\z\|=1,|t|=1$.  Then $Z=\begin{bmatrix}\z\\ \z\bar t\end{bmatrix}[\z^*\, t\z^*]$.    As the set of the extreme points of the unit ball of $r^\vee(C)\le 1$ is $\cE$ we deduce that there exists $Z$ of the above form.  

Suppose that $Z$ of the above form is positive semidefinite, and $\tr X=1$.
Let us assume first that $X=\frac{1}{n} I_n$.  Then $Z=\frac{1}{n}I_{2n}+ H(Y)$, where $H(Y)$ is given in Proposition \ref{SAA} .
As $Z\succeq 0$, Proposition \ref{SAA} yields that  $\sigma_1(Y)\le 1/n$.  Let $F=nY$.  Then $\sigma_1(F)\le 1$.  We claim that $F$ is a convex combination of $n+1$ unitary matrices.   Let $F=W\Sigma_n(F)V^*$ be the SVD decomposition of $F$,
where  $W,V$ are unitary matrices.  Let $\bbf=(\sigma_1(F),\ldots,\sigma_n(F))^\top\in\R^{n}$.  Then $\|\bbf\|_\infty\le 1$.  Recall that the set of the extreme points $\cF_n$ of the  unit ball of $\ell_{\infty}$ norm in $\R^n$ are $2^n$ vectors of the form $(\pm 1,\ldots,\pm 1)^\top$.   Hence, 
$\bbf$ is a convex combination of $n+1$ extreme points  in $\cF_n$.  Therefore, 
$$\Sigma_n(F)=\sum_{l=1}^{n+1} g_lD_l \quad g_l\ge 0, l\in[l+1],\sum_{l=1}^{n+1} g_l=1,$$
where $D_l\in\R^{n\times n}$ is a diagonal matrix with diagonal entires $\pm 1$.
Thus, each $D_l$ is unitary, hence each $UD_l V^*$ is unitary.    To prove that $r^\vee(Y)\le 1$, it is enough to show that $r^\vee(\frac{1}{n} (U))\le 1$ for a unitary $U$.
As the spectral decomposition of $U$ is $U=V\diag(t_1,\ldots,t_n)V^*$ we deduce
\begin{equation}\label{specdecU}
U=\sum_{l=1}^n \bv_l t_l\bv_l^*,\quad V=[\bv_1\cdots\bv_n], V^* V=I_n,|t_l|=1, l\in[n].
\end{equation}
Hence, $r^\vee(\frac{1}{n}U)\le 1$.

We now consider the general case $X\succeq 0$ and $\tr X=1$.
Then there exists unitary $U\in \H^{n\times n}$ such that $U^*XU=\Lambda, \Lambda=\diag(\lambda_1, \ldots,\lambda_n)$, where $\lambda_1\ge \cdots\ge \lambda_n\ge 0$, and $\sum_{l=1}^n \lambda_n=1$.   
Observe
$$Z_1:=\diag(U^*, U^*)Z\diag(U,U)=\begin{bmatrix}\Lambda &Y_1,\\Y_1^*&\Lambda\end{bmatrix}, \quad Y_1=U^*YU.$$ 
As  $r^\vee(Y)=r^\vee(Y_1)$,  it suffices to show that $r^\vee(Y_1)\le 1$.
As $Z_1\succeq 0$ we deduce straightforward that if $\lambda_k=0$ then the $k$-the and $n+k$-th row of $Z$ are zero.  Hence, it is enough to consider the case where $\lambda_n>0$.
Let 
\begin{equation*}
\begin{aligned}
&Z_2:=\diag(\Lambda^{-1/2}, \Lambda^{-1/2})Z_1\diag(\Lambda^{-1/2}, \Lambda^{-1/2})=\begin{bmatrix} I_n &F,\\F^*& I_n\end{bmatrix}, \\ 
&F=\Lambda^{-1/2}Y_1\Lambda^{-1/2}.
\end{aligned}
\end{equation*}

Our previous arguments show that  $F$ is a convex combination of $n+1$ unitary matrices.  To conclude the theorem, to is enough to show that $r^\vee(\Lambda^{1/2}U\Lambda^{1/2})\le 1$.  The equaity \eqref{specdecU} yields 
\begin{equation*}
\begin{aligned}
&\Lambda^{1/2}U\Lambda^{1/2}=\sum_{l=1}^n (\Lambda^{1/2}\bv_l)
 t_l(\Lambda^{1/2}\bv_l)^*=\\
& \sum_{l=1}^n \|\Lambda^{1/2}\bv_l\|^2 \bq_lt_l\bq_l^*, \bq_l= \|\Lambda^{1/2}\bv_l\|^{-1}(\Lambda^{1/2}\bv_l), l\in[n].
 \end{aligned}
 \end{equation*}
It is left to show that 
$\sum_{j=1}^n \|\Lambda^{1/2}\bv_j\|^2=1$.
That is 
\begin{equation*}
\begin{aligned}
&\sum_{l=1}^n \tr \bv_l^*\Lambda\bv_l=\Re\sum_{l=1}^n \tr \bv_l^*\Lambda\bv_l=
\Re\sum_{j=1}^n \tr \Lambda\bv_j  \bv_j^*=\\
&\Re\Lambda\tr(\sum_{j=1}^n\bv_j \bv^*_j)=\Re\tr \Lambda=1.
\end{aligned}
\end{equation*}
\end{proof}
\begin{corollary}\label{rveechar}
Let $Y\in\H^{n\times n}$.  Then 
\begin{equation}\label{rveechar1}
r^\vee(Y)=\min\{ \tr W: \begin{bmatrix}W&Y\\Y^*&W\end{bmatrix}\in\rH_{2n,+}(\H)\}.
\end{equation}
\end{corollary}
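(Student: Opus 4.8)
The plan is to deduce Corollary~\ref{rveechar} from the ``if and only if'' in Theorem~\ref{rveeSDPchar} by a homogeneity/scaling argument, using that $r^\vee(\cdot)$ is an $\R$-norm (Proposition~\ref{rveeub}), so in particular $r^\vee(Ya)=|a|\,r^\vee(Y)$ for $a\in\R$. Write $\mu(Y)$ for the right-hand side of \eqref{rveechar1}, i.e.\ the infimum of $\tr W$ over all $W\in\rH_n(\H)$ with $\begin{bmatrix}W&Y\\Y^*&W\end{bmatrix}\in\rH_{2n,+}(\H)$; we must show this infimum is finite, attained, and equal to $r^\vee(Y)$.

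First I would dispose of the structural points. The feasible set is nonempty: by Proposition~\ref{SAA}, $\begin{bmatrix}cI_n&Y\\Y^*&cI_n\end{bmatrix}=cI_{2n}+H(Y)\succeq 0$ whenever $c\ge\sigma_1(Y)$, so $W=cI_n$ is feasible and $\mu(Y)<\infty$. For any feasible $W$ the block matrix is self-adjoint, hence $W\in\rH_n(\H)$, and $W\succeq 0$ as a principal submatrix; thus $\tr W\ge 0$, and $\tr W=0$ forces $W=0$ and therefore $Y=0$. If $Y=0$ both sides of \eqref{rveechar1} vanish (take $W=0$) and there is nothing to prove, so we may assume $Y\ne 0$, in which case every feasible $W$ has $\tr W>0$ and hence $\mu(Y)>0$.

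Next comes the scaling. For $r^\vee(Y)\le\mu(Y)$: given feasible $W$ with $\tr W=\mu>0$, put $X=W/\mu$; then $\begin{bmatrix}X&Y/\mu\\Y^*/\mu&X\end{bmatrix}=\frac1\mu\begin{bmatrix}W&Y\\Y^*&W\end{bmatrix}\succeq 0$ and $\tr X=1$, so Theorem~\ref{rveeSDPchar} gives $r^\vee(Y/\mu)\le 1$, whence $r^\vee(Y)=\mu\,r^\vee(Y/\mu)\le\mu$; taking the infimum over all feasible $W$ gives $r^\vee(Y)\le\mu(Y)$. For the reverse, set $\rho=r^\vee(Y)>0$; then $r^\vee(Y/\rho)=1\le 1$, so Theorem~\ref{rveeSDPchar} produces $X$ with $\begin{bmatrix}X&Y/\rho\\Y^*/\rho&X\end{bmatrix}\succeq 0$ and $\tr X=1$. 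Then $W:=\rho X$ satisfies $\begin{bmatrix}W&Y\\Y^*&W\end{bmatrix}=\rho\begin{bmatrix}X&Y/\rho\\Y^*/\rho&X\end{bmatrix}\succeq 0$ and $\tr W=\rho$, so $\mu(Y)\le\rho$ and the infimum is attained at this $W$. Combining the two inequalities yields $\mu(Y)=r^\vee(Y)$ with ``$\min$'' in place of ``$\inf$''.

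No serious obstacle is expected here; the argument is a routine scaling reduction to Theorem~\ref{rveeSDPchar}. The only points that need a little care are the degenerate case $Y=0$, the nonemptiness of the feasible set (so that $\mu(Y)$ is finite), and noticing that the construction used for the reverse inequality simultaneously exhibits a minimizer, so that the infimum is genuinely a minimum.
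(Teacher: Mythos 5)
Your proof is correct and is, in spirit, exactly what the paper intends — the paper states Corollary~\ref{rveechar} immediately after Theorem~\ref{rveeSDPchar} with no written proof, so it is left as the routine scaling/homogeneity deduction you carry out. You have also supplied the small housekeeping details the paper omits (nonemptiness of the feasible set via $W=cI_n$ with $c\ge\sigma_1(Y)$, the degenerate case $Y=0$, and the observation that the construction in the reverse direction exhibits an actual minimizer so that ``$\inf$'' can be replaced by ``$\min$''), all of which are accurate.
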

\begin{lemma}\label{rveelem}  Let $Y\in \H^{n\times n}$.  Then the characterization \eqref{rveechar1} is an SDP characterization of the form \eqref{SDPsf} in $\rH_{2n}(\H)$ with $k=n(2n-1)$.   More precisely,  assume that $W_1,\ldots,W_k$ is a basis in $\rH_n(\H)$, where $W_1=\frac{1}{n}I_n$ and $\tr W_i=0$ for $i=2,\ldots,k$.  Then 
\begin{equation}\label{rveelem1}
X_0=\rH(Y), \, X_i=\diag(W_i,W_i), i\in[k],  c_1=1, c_i=0\, \rm{for}\, i=2,\ldots,k.
\end{equation}
Furthermore,  the strong duality holds, and
\begin{equation}\label{rveelem2}
\begin{aligned}
&r^\vee(Y)=\max\{\Re\tr -H(Y)Z:  \tr Z=n, \\
&\Re\tr  X_iZ=0, i=2,\ldots,k, Z\in\rH_{2n,+}(\H)\}. 
\end{aligned}
\end{equation}
\end{lemma}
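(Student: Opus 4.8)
The plan is to substitute the basis expansion $W=\sum_{i=1}^k s_iW_i$ into \eqref{rveechar1}, read off the data of the standard form \eqref{SDPsf}, and then verify the Slater condition of Theorem \ref{corSlater} to obtain strong duality together with attainment of the dual optimum. Since $W_1,\dots,W_k$ is a basis of $\rH_n(\H)$ and $k=\dim_\R\rH_n(\H)=n(2n-1)$, every $W\in\rH_n(\H)$ is uniquely $W=\sum_{i=1}^k s_iW_i$ with $(s_1,\dots,s_k)^\top\in\R^k$, and then, with $H(Y)$ the matrix of Proposition \ref{SAA},
\[
\begin{bmatrix}W&Y\\Y^*&W\end{bmatrix}=H(Y)+\sum_{i=1}^k s_i\diag(W_i,W_i),\qquad \tr W=\sum_{i=1}^k s_i\tr W_i=s_1,
\]
the last equality using $\tr W_1=\tr(\tfrac1nI_n)=1$ and $\tr W_i=0$ for $i\ge 2$. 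This exhibits \eqref{rveechar1} as the instance of \eqref{SDPsf} in $\rH_{2n}(\H)$ with $X_0=H(Y)$, $X_i=\diag(W_i,W_i)$, $c_1=1$, $c_i=0$ for $i\ge 2$, i.e. \eqref{rveelem1}. Moreover $X_1,\dots,X_k$ are linearly independent ($W\mapsto\diag(W,W)$ is injective and the $W_i$ form a basis), and $X_0$ is block off-diagonal whereas the $X_i$ are block diagonal, so $X_0=0$ (if $Y=0$) or $X_0,X_1,\dots,X_k$ are linearly independent (if $Y\ne 0$); thus the normalization in the discussion preceding Lemma \ref{lcharcAX} holds automatically.

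For strong duality I would check Slater's condition. Taking $W=cI_n$ with $c>\|Y\|_\infty=\sigma_1(Y)$ gives $H(Y)+cI_{2n}=\bigl[\begin{smallmatrix}cI_n&Y\\Y^*&cI_n\end{smallmatrix}\bigr]\succ 0$, since by Proposition \ref{SAA} the eigenvalues of $H(Y)$ are $\pm\sigma_1(Y),\dots,\pm\sigma_r(Y)$ together with zeros; and $cI_n$ lies in the feasible family because $\{W_i\}$ is a basis. Since also $val=r^\vee(Y)$ is finite by Corollary \ref{rveechar}, Theorem \ref{corSlater} yields $val=val^\vee$ for the dual \eqref{dSDPsf} (with $Z\in\rH_{2n,+}(\H)$).

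It remains to unravel \eqref{dSDPsf} with the data \eqref{rveelem1}. Any $Z\in\rH_{2n,+}(\H)$ has real diagonal, so $\tr Z\in\R$; hence the constraint $\Re\tr X_1Z=c_1$ reads $\tfrac1n\tr Z=1$, i.e. $\tr Z=n$, the constraints for $i\ge 2$ read $\Re\tr\diag(W_i,W_i)Z=0$, and the objective $-\Re\tr X_0Z$ equals $\Re\tr(-H(Y))Z$ --- which is precisely \eqref{rveelem2}. The supremum is attained (so ``$\max$'' is legitimate) because both programs are strictly feasible: the primal as above, and $Z=\tfrac12I_{2n}$ for the dual (it has trace $n$ and $\Re\tr\diag(W_i,W_i)\cdot\tfrac12I_{2n}=\Re\tr W_i=0$ for $i\ge 2$), so by strong conic duality both are solvable with no gap. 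I expect no real obstacle: the content is bookkeeping --- getting the normalization of the $i=1$ dual constraint right (this is why $W_1=\tfrac1nI_n$ is imposed), using that $\tr Z\in\R$, and checking strict feasibility on both sides to license ``$\max$''.
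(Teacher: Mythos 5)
Your proposal is correct and follows essentially the same route as the paper: expand $W$ in the basis $\{W_i\}$ to identify the data \eqref{rveelem1}, use $W=cI_n$ with $c>\sigma_1(Y)$ to get a strictly feasible primal point (the paper takes $c=\sigma_1(Y)+1$), invoke Theorem \ref{corSlater}, and read off the dual \eqref{dSDPsf} as \eqref{rveelem2}. Your extra checks (linear independence of $X_0,\dots,X_k$, and dual strict feasibility via $Z=\tfrac12 I_{2n}$ to justify the ``$\max$'') are harmless refinements; the paper relies on the standard fact that primal strict feasibility already forces dual attainment.
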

\begin{proof}  As $W_1,\ldots,W_{k}$ is a basis in $\rH_n(\H)$ it follows that each $W\in \rH_n(\H)$ has a unique representation $W=\sum_{i=1}^k s_iW_i$.   As $W_1=\frac{1}{n}I_n$ and $\tr W_i=0$ for $i>1$ it follows that $\tr W=s_1$.  
Therefore, 
\begin{equation*}
\begin{aligned}
&Z=X_0+\sum_{i=1}^k s_iX_i=\begin{bmatrix}W&Y\\Y^*&W\end{bmatrix},  \\
&\tr Z=2\tr W=2 s_1,\quad \sum_{i=1}^k c_is_i=c_1 s_1=\tr W
\end{aligned}
\end{equation*}
Therefore, the SDP \eqref{SDPsf} is \eqref{rveechar1}.  Clearly,  if we let $W=(s_1(Y)+1)I_n$ the matrix $Z$ is positive definite.  Hence, Theorem \ref{corSlater} applies.
Observe that \eqref{dSDPsf} is equivalent to \eqref{rveelem2}.
\end{proof}
\subsection{SDP Characterization of $r(\cdot)$}\label{subsec:sdpqr}
The following SDP characterization of qradius  is a generalization of the 
  characterization of $r(C)$  for $C\in\C^{n\times n}$ stated in \cite[Theorem 1.2]{LO20}, which is essentially due to T. Ando [Lemma 1]\cite{And73}.    (See also  \cite[Theorem $2. 1$]{Mat93}).
\begin{theorem}\label{SDPrAq}  Let $A\in\H^{n\times n}$.    Then
\begin{equation}\label{SDPcharA}
r(A)=\min\{a\in\R: \begin{bmatrix} aI_n +Z&A\\A^*& aI_n -Z\end{bmatrix}\in\rH_{n,+}(\H)\}.
\end{equation}
\end{theorem}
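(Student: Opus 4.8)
The plan is to reduce the quaternionic statement to the known complex case by passing through the numerical-radius characterization established in Theorem \ref{qnumrrthm}, together with the SDP machinery of Section \ref{sec:SDP}. First I would set $F:=\sum_{l=1}^4 t_lC_l\in\rS_{4n}(\R)$ for $\bt=(t_1,t_2,t_3,t_4)^\top$ with $\|\bt\|=1$, and recall from \eqref{numrform} that $r(A)=\max_{\|\bt\|\le 1}\lambda_{\max}(F)$. The key classical fact is that for a real symmetric $F$ and $a\in\R$ one has $\lambda_{\max}(F)\le a$ iff $aI-F\succeq 0$; combined with the maximum over the unit ball $\{\bt:\|\bt\|\le 1\}$ this is exactly an SDP of the type handled by \eqref{SDPcharCa}-style arguments. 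So the real content is to show that the block matrix condition in \eqref{SDPcharA} over $\rH_n(\H)$ is equivalent to the $\lambda_{\max}$ formula in \eqref{numrform}.

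The core step I would carry out is a Schur-complement / quadratic-form argument. Write $M(a,Z)=\begin{bmatrix}aI_n+Z&A\\A^*&aI_n-Z\end{bmatrix}$ for $Z\in\rH_n(\H)$. Then $M(a,Z)\succeq 0$ for some $Z$ iff for every $\x\in\H^n$ one can find a matching $\y$ making $[\x^*\ \y^*]M(a,Z)[\x^*\ \y^*]^*\ge 0$; the standard manipulation (as in Ando's lemma) is that $M(a,Z)\succeq 0$ for some selfadjoint $Z$ iff $a\ge \Re(\bar t\,\x^*A\x)$ for all unit $\x$ and all unit scalars $t$ — equivalently $a\ge r(A)$. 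Concretely, following \cite{And73,LO20,Mat93}: given unit $\x$, test $M(a,Z)$ against $\begin{bmatrix}\x\\ t\x\end{bmatrix}$ for a unit quaternion (or complex) $t$, which yields $2a + \x^*Z\x - t^*\x^*Z\x t + 2\Re(\bar t\,\x^*A\x)\ge 0$; using \eqref{SAxid}-type reasoning one can cancel the $Z$ terms and get $a\ge -\Re(\bar t\,\x^* A\x)$, hence $a\ge r(A)$. Conversely, for the existence of $Z$ when $a\ge r(A)$, I would mimic the complex construction: pass to $C(A)\in\rH_{2n}(\C)$ (or to the $4n\times 4n$ real picture of Theorem \ref{qnumrrthm}), invoke the known complex SDP characterization \eqref{SDPcharCa} there to obtain a complex $\tilde Z$, and then show that the quaternionic structure of the problem lets one choose $\tilde Z$ of the form $C(Z)$ for some $Z\in\rH_n(\H)$ — or alternatively symmetrize any solution over the action identifying $\rH_n(\H)$ inside $\rH_{2n}(\C)$, using that the feasible set and objective are invariant under that action and that the cone $\rH_{n,+}(\H)$ is exactly the fixed-point set intersected with the PSD cone.

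I expect the main obstacle to be precisely this last point: ensuring the certificate matrix $Z$ can be taken genuinely quaternionic-selfadjoint rather than merely complex-Hermitian. The $\Leftarrow$ direction of \eqref{SDPcharA} (feasibility forces $a\ge r(A)$) is a routine quadratic-form computation and should go through verbatim as in the complex case, using \eqref{x*Axiden} to express $\x^*A\x$ and \eqref{SAxid} to kill the $Z$-contributions. The $\Rightarrow$ direction requires producing $Z$, and the cleanest route is to invoke Theorem \ref{qnumrrthm}(c): since $r(A)=\max_{\|\bt\|\le1}\lambda_{\max}(\sum t_lC_l)$, the value $r(A)$ is itself the optimum of a semidefinite program in the real symmetric $4n\times 4n$ setting, and LP/SDP duality (or the classical Ando argument applied in that real picture) supplies the dual certificate; one then has to translate that $4n\times 4n$ real certificate back through the correspondences $A\leftrightarrow C(A)\leftrightarrow R(A)$ of \eqref{crrep} into a block structure of the form $\begin{bmatrix}aI+Z&A\\ A^*&aI-Z\end{bmatrix}$ with $Z\in\rH_n(\H)$. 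Verifying that the block structure and the reality/quaternionic symmetry are simultaneously respected is the delicate bookkeeping step; everything else is standard.
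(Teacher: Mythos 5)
Your easy direction, testing $\begin{bmatrix}aI_n+Z&A\\A^*&aI_n-Z\end{bmatrix}\succeq 0$ against the block vector $(\x,-\x t)$ with $\|\x\|=1$, $|t|=1$, and using \eqref{SAxid} to annihilate the $Z$-contributions, is exactly what the paper does. One caution: you wrote the test vector as $\begin{bmatrix}\x\\ t\x\end{bmatrix}$; in the noncommutative setting it must be $\x t$ (right scalar multiplication) for \eqref{SAxid} to cancel the $Z$-terms.

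Your hard direction departs from the paper, and as written it is only a sketch with the decisive steps left open. The paper never complexifies: it observes that \eqref{SDPcharA} is a standard quaternionic SDP with a strictly feasible point ($Z=0$, $a>\|A\|_\infty$), applies Slater's condition (Theorem \ref{corSlater}), computes the dual problem \eqref{dSDPsf} whose equality constraints force the dual variable into the block shape $\begin{bmatrix}W&Y\\Y^*&W\end{bmatrix}$ with $\tr W=\tfrac12$, and then invokes Theorem \ref{rveeSDPchar} to recognize positivity of that dual matrix as $r^\vee(2Y)\le 1$; the dual value is then $\max\{\Re\tr B^*A: r^\vee(B)\le 1\}=r(A)$ by the bidual property of norms. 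All the hard content is carried by Theorem \ref{rveeSDPchar}, which you never invoke.

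Your proposed complexification route (pass to $C(A)$, apply \eqref{SDPcharCa} there, then symmetrize the resulting $\tilde Z\in\rH_{2n}(\C)$ into $C(\rH_n(\H))$) can in fact be made rigorous, but you flag the crux as ``delicate bookkeeping'' and do not do it, and that is precisely where the proposal is incomplete. What is actually needed is: (i) $r(C(A))\le r(A)$ (this is in Proposition \ref{propqradprop}(c)); (ii) the identification of $C(\rH_n(\H))\subset\rH_{2n}(\C)$ as the fixed set of the real-linear involution $\tilde Z\mapsto J\,\overline{\tilde Z}\,J^{-1}$, $J=\begin{bmatrix}0&I_n\\-I_n&0\end{bmatrix}$, the verification that this involution fixes $C(A)$ and preserves positive semidefiniteness (unitary congruence followed by entrywise conjugation), so the averaged certificate $\tfrac12(\tilde Z+J\overline{\tilde Z}J^{-1})$ stays feasible; and (iii) the permutation congruence relating $C$ of the $2n\times 2n$ quaternionic block matrix to the $4n\times 4n$ matrix $\begin{bmatrix}aI_{2n}+C(Z)&C(A)\\C(A)^*&aI_{2n}-C(Z)\end{bmatrix}$. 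Separately, your fallback of applying ``the classical Ando argument'' in the $4n\times 4n$ real picture of Theorem \ref{qnumrrthm}(c) does not go through as stated: Ando's lemma certifies $\max_\theta\lambda_{\max}(\cos\theta\,E+\sin\theta\,F)$, a one-parameter family on a circle, whereas \eqref{numrform} maximizes over $\|\bt\|=1$ in $\R^4$; there is no off-the-shelf four-parameter version, and supplying one is essentially the work the paper packages into Theorem \ref{rveeSDPchar}.
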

\begin{proof} Consider the infimum problem
\begin{equation*}
\mu(A)=\inf\{a\in\R: \begin{bmatrix} aI_n +Z&A\\A^*& aI_n -Z\end{bmatrix}\succeq 0\}.
\end{equation*}
Let $T(a,Z)=\begin{bmatrix} aI_n +Z&A\\A^*& aI_n -Z\end{bmatrix}\in\rH_{2n}(\H)$.
Let $t\in\H, |t|=1$.
Use the equiality \eqref{SAxid} and $T(a,Z)\succeq 0$ to deduce
\begin{equation*}
\begin{aligned}
&aI+Z\succeq 0, \quad aI-Z\succeq 0\Rightarrow a\ge 0, Z\in\rH_n(\H),\\
&(\x^*, -\bar t\x^*)T(a,Z)(\x^*, -\bar t\x^*)^*\ge 0\Rightarrow  2(\x^*\x a-\Re  \x^* A\x t)\ge 0.
\end{aligned}
\end{equation*}
Hence $\mu(A)\ge r(A)$.   Clearly, for $\varepsilon>0$ the following condition hold:
$$T(\|A\|_{\infty}+\varepsilon,0)=(\|A\|_{\infty}+\varepsilon) I_{2n}+H(A)\succ 0.$$ 
Hence $\mu(A)\le \|A\|_{\infty}$.

Observe that the infimum problem for $\mu(A)$ is a standard SDP problem of the form
\eqref{SDPsf}.   Let $k=n(2n-1)+1$ and assume that $W_1,\ldots,W_{k-1}$ is a basis in $\rH_n(\H)$.  Set
\begin{equation}\label{adthmSDPrAq}
\begin{aligned}
&X_0=H(A), X_1=I_{2n},  c_1=1, \\
&X_i=\diag(W_{i-1},-W_{i-1}), c_i=0, i=2,\ldots,k.
\end{aligned}
\end{equation}
Then the infimum problem for $\mu(A)$ is the problem \eqref{SDPsf}.  As we showed that there exists a feasible positive definite matrix, Theorem \ref{corSlater} yields that the value of the dual problem $\mu^\vee(A)$ is equal to $\mu(A)$.  The dual problem
for $\mu(A)$ is given by \eqref{dSDPsf}:
\begin{equation*}
\mu(A)=\max\{\tr H(-A) Z: \tr Z=1, \Re \tr X_i Z=0, =2,\ldots, k, Z\in\rH_{2n}(\H)\}.
\end{equation*}
First observe that the conditions $\tr X_i Z=0, =2,\ldots, k$ yield that $Z=\begin{bmatrix} W&Y\\Y^*&W\end{bmatrix}$, and $\tr W=\frac{1}{2}$.  Theorem \ref{rveeSDPchar}   yields that  $Z\succeq 0$ if and only if $r^\vee(2Y)\le 1$.
Observe that $\tr H(-A)Z=\Re\tr A(-2Y^*)=\Re(-2Y^*)A$.   Recall that $r^\vee (2Y)=r^\vee(-2Y)=r^\vee(-2Y^*)$.  Hence, the dual characterization of $\mu(A)$ is
$\mu(A)=\max\{\Re\tr B^*A, r^\vee(B)\le 1\}$.  Compare that with \eqref{defdnrm1} to deduce that $\mu(A)=r(A)$.
\end{proof}
\subsection{Polynomial computability of $r(\cdot)$ and $r^\vee(\cdot)$}\label{subsec:polcomp}
The following result is a generalization of \cite[Theorem 4.1]{FL23} to quaternions:
\begin{theorem}\label{ptcrA}  Let $A\in \Q^{n\times n}[\bH]$. and $0<\varepsilon\in\Q$.
Then there exists an $\varepsilon$ approximation of $r(A)$ and ,$r^\vee(A),$ in poly-time in 
$n, |\log \varepsilon|$ and the entries of $A$ using the short step primal interior point method combined with  Diophantine approximation.
\end{theorem}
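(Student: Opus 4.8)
The plan is to reduce the quaternionic optimization problems for $r(A)$ and $r^\vee(A)$ to standard semidefinite programs over Hermitian complex matrices, verify that these SDPs satisfy the hypotheses of Theorem \ref{cdeKVal}, and then invoke that theorem. First I would recall the SDP characterizations already established: Theorem \ref{SDPrAq} expresses $r(A)$ as the infimum problem \eqref{SDPsf} in $\rH_{2n}(\H)$ with data \eqref{adthmSDPrAq}, and Corollary \ref{rveechar} together with Lemma \ref{rveelem} expresses $r^\vee(A)$ as the SDP \eqref{rveechar1} in $\rH_{2n}(\H)$ with data \eqref{rveelem1}. In both cases the data matrices $X_0,\ldots,X_k$ and the cost vector can be chosen with entries in $\Q[\H]$ once $A\in\Q^{n\times n}[\H]$, because the relevant bases $W_1,\ldots,W_{k}$ of $\rH_n(\H)$ may be taken with rational quaternionic entries (e.g.\ the standard diagonal units $E_{ss}$, the real symmetric pairs $E_{st}+E_{ts}$, and the imaginary pairs $\bi(E_{st}-E_{ts})$, $\bj(E_{st}-E_{ts})$, $\bk(E_{st}-E_{ts})$).

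Next I would translate each SDP from $\rH_m(\H)$ (with $m=2n$) into an equivalent SDP in $\rH_{2m}(\C)$ via the map $X\mapsto C(X)$ introduced in \secref{sec:quatmat}: $C(\cdot)$ is an $\R$-linear embedding $\rH_m(\H)\hookrightarrow \rH_{2m}(\C)$, it preserves positive semidefiniteness ($X\succeq0\iff C(X)\succeq0$), and it satisfies $\Re\tr XY=\tfrac12\Re\tr C(X)C(Y)$ by the identity recorded after Definition \ref{defHn(F)}. This is exactly the reduction announced in \secref{subsec:comSDP}. One point requiring a short argument is that the image of $C$ is a \emph{proper} subspace $\rm{Q}_c^{m\times m}\cap\rH_{2m}(\C)$ of $\rH_{2m}(\C)$, so to obtain a bona fide SDP in standard form one must adjoin linear constraints cutting out this subspace; these constraints have rational (indeed integer) coefficients, so the translated data still lies in $\Q[\C]$. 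Equivalently, and more cleanly, one keeps the form \eqref{SDPsf}: the affine slice $\cA(C(X_0),\dots,C(X_k))$ automatically lands inside $\rm{Q}_c^{m\times m}$, so the translated problem is the standard-form SDP with generators $C(X_i)$ and the same cost vector, and Lemma \ref{lcharcAX} converts it to the equality-constrained form \eqref{srSDP} required by Theorem \ref{cdeKVal}.

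The remaining verification is the geometric regularity condition \eqref{BrRcond}: one must exhibit a rational feasible point $Y_0$ and rational radii $0<r\le R$ bounding the feasible slice between two balls. For $r(A)$ the feasible set in the variable $a$ is the half-line $a\ge r(A)$ (with $Z$ free); after intersecting with the hyperplane fixing $Z=0$ — no, rather: here the cleanest route is to bound $a$ above by $\|A\|_\infty+1$ (rational once we use $\|A\|_\infty^2\le\|A\|_F^2$, a rational bound) and below by $0$ as shown in the proof of Theorem \ref{SDPrAq}, and to bound $\|Z\|_F$ in terms of $a$ from the constraint $aI_n\pm Z\succeq0$, giving an explicit $R$; a strictly feasible $Y_0$ comes from $(a,Z)=(\|A\|_\infty+1,0)$ with an explicit rational lower bound on its smallest eigenvalue, which yields $r$. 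For $r^\vee(A)$ the same is done with the strictly feasible point $(s_1(A)+1)I_n$ from the proof of Lemma \ref{rveelem}, bounding $\tr W$ above by the objective value of this point and below by $0$. I expect this last step — producing \emph{explicit rational} $Y_0,r,R$ with bit size polynomial in $n$ and the bit size of $A$, uniformly handling the two problems — to be the main obstacle, since it requires a quantitative (not merely qualitative) strict-feasibility and boundedness analysis; everything else is bookkeeping. With \eqref{BrRcond} in hand, Theorem \ref{cdeKVal} applied to the complex translations delivers, for any rational $\varepsilon>0$, a feasible $X^\star$ with $\langle C,X^\star\rangle-\varepsilon\le val$ in time polynomial in $n$, $\log(R/r)$, $|\log\varepsilon|$ and the bit size of the data; translating $X^\star$ back through $C^{-1}$ (on its image) and reading off the scalar $a$, respectively $\tr W$, gives the desired $\varepsilon$-approximation of $r(A)$, respectively $r^\vee(A)$, in poly-time. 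This proves Theorem \ref{ptcrA}.
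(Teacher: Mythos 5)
Your overall strategy — recast both problems as SDPs over $\rH_m(\H)$, pass to $\rH_{2m}(\C)$ via $C(\cdot)$, obtain the quantitative inclusions \eqref{BrRcond}, and invoke Theorem \ref{cdeKVal} — is the same as the paper's, and your remarks on preserving rationality and the image subspace $\mathrm{Q}_c^{m\times m}$ are fine. However, the step you flag as ``the main obstacle'' is precisely where your argument breaks, and the paper's handling of it is the substantive content of the proof.

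The difficulty is that the feasible sets of the SDPs in Theorem \ref{SDPrAq} and Lemma \ref{rveelem} are \emph{unbounded}: in \eqref{SDPcharA} one can take $a\to\infty$ with $Z=0$, and in \eqref{rveechar1} one can take $W=cI_n$ with $c\to\infty$. So no ball $\rB(Y_0,R)$ can contain $\cF$, and the second inclusion in \eqref{BrRcond} is unsatisfiable for these SDPs as they stand. Your proposal to ``bound $a$ above by $\|A\|_\infty+1$'' is not something you can do by fiat; an upper bound on $a$ is not a consequence of the constraints, and inserting one changes the optimization problem in a way you have not specified within the SDP machinery. (It also would not be rational.) The paper's fix is to first compute a rational integer $\omega(A)$ with $\|A\|_F+1\le\omega(A)\le\|A\|_F+2$, then enlarge the SDP to live in $\rH_{2n+1}(\H)$ by appending a nonnegative $1\times1$ slack block $t$, and impose the single rational linear equality $2na+t=3(2n+1)\omega(A)$ (respectively $\tr Z=(4n+2)\omega(A)$ for $r^\vee$). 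Because $t\ge0$ is forced by positive semidefiniteness of the enlarged matrix, this caps $a$ (resp.\ $\tr W$) and, combined with $aI_n\pm Z\succeq0$, compactifies the feasible slice without changing the optimal value. Only then does the paper exhibit the explicit rational center $E_0=\diag(3\omega(A)I_{2n}+H(A),\,3\omega(A))$ (resp.\ $Z_0=\diag(2\omega(A)I_{2n}+H(A),\,2\omega(A))$), verify $\lambda_{\min}(E_0)>2\omega(A)$ so it is strictly feasible, and compute both inclusions with $r=\omega(A)$ and $R=8n\omega(A)$ (resp.\ $5n\omega(A)$), giving $R/r=8n$ (resp.\ $5n$). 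You acknowledge the need for such bounds but supply neither the compactification device nor the inclusion computations, so the proof is incomplete at its decisive step.
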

\begin{proof}  
 We can find  $\omega(A)\in\N$ in polynomial time in $n$ and  the entries of $A$   such that $\|A\|_F+1\le \omega(N)\le \|A\|_F+2$.   We first consder $r(A)$.  Recall that $r(A)\le \|A\|_{\infty}\le \|A\|_F$.   We next consider the following subset of selfadjoint matrices  matrices in $\H^{(2n+1)\times (2n+1)}$:
\begin{equation}\label{defcA}
\begin{aligned}
&\cF=\{Y=\begin{bmatrix}aI_n +Z&A&0\\A^*&aI_n-Z&0\\0&0&t\end{bmatrix}\in \rH_{2n+1,+}(\H):\\ 
&2na+t=3(2n+1)\omega(A)\}.
\end{aligned}
\end{equation}
This admissible set $\cF$ has similar description to the admissible set in the proof of Theorem \ref{SDPrAq}.  Let $k=n(2n-1)+1$ and $X_0,\ldots, X_{k}$ be defined as in \eqref{adthmSDPrAq}.  Define
\begin{equation*}
\begin{aligned}
&Y_0=\diag(X_0,3(2n+1)\omega(A)), \\
&Y_1=\diag(X_1,-2n), Y_i=\diag(X_i,0), i=2,\ldots,k.
\end{aligned}
\end{equation*}

It is straightforward to check that the admissible set $\cF$ is of the form $\cA(Y_0,\ldots,Y_k)\cap \rH_{2n+1,+}(\H)$, where we used the notation \eqref{desccA}.
Use Lemma \ref{lcharcAX} to find explicitly $A_j\in \Q^{(2n+1)\times (2n+1)}[\H]\cap \rH_{2n+1}(\H)$ and $b_j\in \Q$ such that
\begin{equation*}
\begin{aligned}
&\cA:=\cA(Y_0,\ldots,Y_k)=\rL(A_1,\ldots,A_m,b_1,\ldots,b_m),  \\
&m=(2n+1)(4n+1)-n(2n-1)-1,  \\
&\dim \rL(A_1,\ldots,A_m,b_1,\ldots,b_m)=n(2n-1)+1.
\end{aligned}
\end{equation*}

Set $F=\frac{1}{2n}\diag(I_{2n},0)$.  It is straightforward to show using \eqref{SDPcharA} that
\begin{equation*}
r(A)=\min\{\langle F,Y\rangle, Y\in \cF\}.
\end{equation*}
It is left to show that the conditions of Theorem \ref{cdeKVal} are satisfied.
Clearly,  we can assume that $n\ge 2$.

Let 
\begin{equation*}
E_0=\begin{bmatrix}3\omega(A)I_n&A&0\\A^*&3\omega(A)I_n&0\\0&0&3\omega(A)\end{bmatrix}=\diag(3\omega(A)I_{2n}+H(A)),3\omega(A)),
\end{equation*}
where $H(A)$ is given in Proposition \ref{SAA}.
 Let $r=\rank A$.  Then $\rank H(A)=2r$, and the nonzero eigenvalues of $(A)$ are $\pm $ of the nonzero singular values of $A$.
Thus 
$$\lambda_{\max}(H(A))=\|A\|_{\infty}\ge \cdots\ge \lambda_{\min}(H(A))=-\|A\|_\infty, \quad \|H(A)\|_{\infty}=\|A\|_{\infty}.$$
Hence, $\lambda_{2n+1}(E_0)> 2\omega(A)$. In particular, $E_0$ is positive definite, and $E_0\in\cF$.  We next show that $\cF\supset \cA\cap \rB(E_0,\omega(A))$.  

Assume that $Y\in \cA\cap \rB(E_0,\omega(A))$.  So $Y$ is of the form given by \eqref{defcA}.  Hence 
\begin{equation*}
\begin{aligned}
&Y-E_0=\diag((a-3\omega(A))I_n+Z,(a-3\omega(A))I_n-Z, t-3\omega(A),  \\
&2n a+t=3(2n+1)\omega(A), \quad \|Y-E_0\|_F\le \omega(A).
\end{aligned}
\end{equation*}
Recall that for any $G\in\H^{p\times q}$ one has inequality $\|G\|_{\infty}\le \|G\|_F$.
Therefore one has the inequalities  
\begin{equation*}
\begin{aligned}
&\omega(A)\ge |t-3\omega(A)|\Rightarrow t\ge 2\omega(A), \\
&\omega^2(A)\ge \|(a-3\omega(A))I_n+Z\|_F^2 +\|(a-3\omega(A))I_n-Z\|_F^2=\\
&2(n(a-3\omega(A))^2+\|Z\|_F^2)\Rightarrow \\
&\omega \ge \sqrt{2n}|a-3\omega(A)|\ge 2|a-3\omega(A)|\Rightarrow a\ge \frac{5}{2}\omega(A),\\
&\omega(A)\ge \sqrt{2}\|Z\|_F\ge \sqrt{2}\|Z\|= \sqrt{2}\|\diag(Z,-Z)\|\ge -\sqrt{2}\lambda_{\min}(\diag(Z,-Z)).
\end{aligned}
\end{equation*}
Hence,
\begin{equation*}
\begin{aligned}
&\lambda_{\min}(aI_{2n}+\diag(Z,-Z)+H(A))=a+\lambda_{\min}(\diag(Z,-Z)+H(A))\ge\\
&a-\frac{1}{\sqrt{2}}\omega(A)-\|A\|_{\infty}> \frac{3}{4}\omega(A)\Rightarrow \lambda_{\min}(Y)> \frac{3}{4}\omega(A).
\end{aligned}
\end{equation*}

We claim that $\cF\subset \cA\cap \rB(E_0, 8n\omega(A))$.  Assume that $Y\in\cF$.  So $Y\succeq 0$ is of the form given by $\eqref{defcA}$.  Hence $aI_n+Z\succeq 0, aI_n-Z\succeq 0, t\ge 0$.  As $2na +t=3(2n+1)\omega(A)$ we deduce that 
\begin{equation*}
\begin{aligned}
&0\le t\le 3(2n+1)\omega(A), \quad 0\le a\le \frac{3(2n+1)}{2n}\omega(A)\le \frac{15}{4}\omega(A)<4\omega(A), \\
&\|Z\|_{\infty}\le a\le 4\omega(A)\Rightarrow \|Z\|_F^2\le 16\omega^2(A)n.
\end{aligned}
\end{equation*}
Hence,
\begin{equation*}
\begin{aligned}
&\|Y-E_0\|_F^2=\|(a-3\omega(A))I_n+Z\|_F^2+\|(a-3\omega(A)I_n-Z\|_F^2 \\
&+(t-3\omega(A))^2
=2n(a-3\omega(A))^2+2\|Z\|_F^2+|t-3\omega(A)|^2\le\\
&(18n+32n+36n^2)\omega^2(A)<64n^2\omega^2(A).
\end{aligned}
\end{equation*}
Observe that $\frac{R}{r}=\frac{8n\omega(A)}{\omega(A)}=8n$.  Use Theorem \ref{cdeKVal} to conclude the proof for $r(A)$.

Consider now $r^\vee(A)$.  Let 
\begin{equation*}
\cA=\{Z=\begin{bmatrix}X&A&0\\A^*&X&0\\0&0&t&\end{bmatrix}\in \rH_{2n+1}(\H): \tr Z=(4n+2)\omega(A)\}.
\end{equation*}
Let $k=n(2n-1)$ and assume that $X_1,\ldots,X_k$ are defined as in \eqref{rveelem1}.
Define teh following matrices in $\in\rH_{2n+1}(\H)$:
\begin{equation*}
\begin{aligned}
&Y_0=\diag(H(A), (4n+2)\omega(A)), Y_1=\diag(X_1, -2), \\ 
&Y_i =\diag(X_i,0)), i=2,\ldots,k.
\end{aligned}
\end{equation*}
It is straightforward to show that $Y_0,\ldots,Y_k$ are linearly independent  The definition \eqref{desccA} yield  that $\cA=\cA(Y_0,\ldots,Y_k)$.  Use Lemma \ref{lcharcAX} to find explicitly $A_j\in \Q^{(2n+1)\times (2n+1)}[\H]\cap \rH_{2n+1}(\H)$ and $b_j\in \Q$ such that
\begin{equation*}
\begin{aligned}
\cA=\rL(A_1,\ldots,A_m,b_1,\ldots,b_m),  \\
m=(2n+1)(4n+1)-n(2n-1)  \\
\dim \rL(A_1,\ldots,A_m,b_1,\ldots,b_m)=n(2n-1).
\end{aligned}
\end{equation*}
Then $\cF=\cA\cap \rH_{2n+1,+}$.  The characterization \eqref{rveechar1} yields that 
$$r^\vee(A)=\min_{Z\in \cF} \tr \diag(I_{n},0)Z.$$

It is left to show that the conditions of Theorem \ref{cdeKVal} are satisfied.
Clearly,  we can assume that $n\ge 2$.
Let $Z_0=\diag(2\omega(A)I_{2n}+H(A), 2\omega(A))$.  Clearly,  $\lambda_{\min}(Z_0)\ge \omega(A)$.  Hence, $Z_0\in \cF$.  The arguments for the case $r(A)$ yield that 
$\rB(Z_0, \omega(A))\cap \cA\subset \cF$.  Similarly, it follows that $\cF\subset \cA\cap \rB(Z_0,5n\omega(A))$.  Hence $\frac{R}{r}=5n$. 
Use Theorem \ref{cdeKVal} to conclude the proof for $r^\vee(A)$.
\end{proof}

Theorem \ref{ptcrA} and the equality \eqref{numrform} yield:
\begin{corollary}\label{polcompmacC1234} Let $C_l\in\rS_{4n}(\R),l\in[4]$ be defined by \eqref{defC1234}.   Then
\begin{equation}\label{maxlmC}
\mu(C_1,\ldots,C_4):=\max_{\|\bt\|\le 1}\lambda_{\max}(\sum_{l=1}^4 t_l C_l)
\end{equation}
is a solution of an SDP problem on $\rS_{4(2n+1)}(\R)$.  Suppose furhtermore that $C_l$ has rational entries for $l\in[4]$.  Then an $\varepsilon$-approximation of $\mu(C_1,\ldots,C_4)$  can be found in poly-time in the entries of $C_l$ and $|\log\varepsilon|$.
\end{corollary}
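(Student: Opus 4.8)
The plan is to read off $\mu(C_1,\ldots,C_4)$ as a quaternionic numerical radius and then invoke results already established. By construction, the matrices $C_1,C_2,C_3,C_4$ of \eqref{defC1234} are built from a matrix $A=A_1+A_2\bj\in\H^{n\times n}$, and part (c) of Theorem \ref{qnumrrthm} --- that is, formula \eqref{numrform} --- says precisely that
\[
\mu(C_1,\ldots,C_4)=\max_{\|\bt\|\le 1}\lambda_{\max}\Big(\sum_{l=1}^4 t_lC_l\Big)=r(A).
\]
Thus every assertion of the corollary is an assertion about $r(A)$.

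For the SDP claim I would start from Theorem \ref{SDPrAq}, which expresses $r(A)$ as the value of the standard SDP \eqref{SDPcharA} on $\rH_{2n,+}(\H)$; so that the feasible region is bounded (as is needed for the complexity part) I would instead use the augmented SDP \eqref{defcA} from the proof of Theorem \ref{ptcrA}, which lives on $\rH_{2n+1,+}(\H)$. Applying the reduction of $\S$\ref{sec:SDP}, the correspondence $X\mapsto C(X)$ of \eqref{crrep} converts this into a standard SDP on complex Hermitian matrices of order $2(2n+1)=4n+2$; composing with the usual faithful embedding $H_1+H_2\bi\mapsto\begin{bmatrix}H_1&-H_2\\ H_2&H_1\end{bmatrix}$ of $\rH_m(\C)$ into $\rS_{2m}(\R)$, which is linear in the data and preserves positive semidefiniteness, yields (up to harmless rescalings of the objective) a standard SDP on $\rS_{2(4n+2)}(\R)=\rS_{4(2n+1)}(\R)$ with optimal value $r(A)=\mu(C_1,\ldots,C_4)$. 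This proves the first assertion.

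For the complexity claim, assume the entries of $C_1,\ldots,C_4$ are rational. Unwinding the definitions in \eqref{defC1234} (through \eqref{crrep}), the real matrices that enter the blocks of $C_1,\ldots,C_4$ --- hence the Cartesian components $B_{ij}$ of $C(A)$ and $C(-A\bj)$, hence $C(A)$ itself --- are obtained from the $C_l$ by fixed rational linear operations, so $C(A)\in\Q^{2n\times 2n}[\C]$ and therefore $A\in\Q^{n\times n}[\H]$. Theorem \ref{ptcrA} then provides, for any rational $\varepsilon>0$, an $\varepsilon$-approximation of $r(A)=\mu(C_1,\ldots,C_4)$ computable in time polynomial in $n$, $|\log\varepsilon|$ and the bit size of $A$ --- equivalently of $C_1,\ldots,C_4$ --- by the short step primal interior point method combined with Diophantine approximation, which is exactly what is claimed.

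The conceptual content is thus entirely contained in Theorem \ref{qnumrrthm} and Theorem \ref{ptcrA}; the work that remains is bookkeeping. One must verify that the chain of reductions $\rH_{2n+1}(\H)\to\rH_{4n+2}(\C)\to\rS_{8n+4}(\R)$ is faithful at the level of the rational SDP data demanded by Theorem \ref{cdeKVal} (the linear forms, the inner/outer ball radii of \eqref{BrRcond}, and the bit sizes of all data), and that passing from $C_1,\ldots,C_4$ back to $A$ is a genuinely size-controlled rational operation; both are routine once one traces through \eqref{crrep}, \eqref{defC1234} and the construction in the proof of Theorem \ref{ptcrA}, and I expect this --- rather than any single inequality --- to be the only real obstacle.
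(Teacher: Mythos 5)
Your proof is correct and takes essentially the same approach as the paper, which proves the corollary in one line by combining the identification $\mu(C_1,\ldots,C_4)=r(A)$ from \eqref{numrform} with Theorem~\ref{ptcrA}. Your tracing of the chain $\rH_{2n+1}(\H)\to\rH_{4n+2}(\C)\to\rS_{4(2n+1)}(\R)$ and of the rationality of $A$ from the rationality of the block entries of $C_1,\ldots,C_4$ just makes explicit the bookkeeping that the paper leaves implicit.
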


It is not known to the author if for every four matrices $C_l\in\rS_n(\R)$ there exists an analog of the above corollary.
\section{A pseudo-numerical range on $\C^{n\times n}$}\label{sec:pnrange}
In this section we introduce the notion of pseudo-numerical range and pseudo-numerical radius, abbreviated as \emph{prange} and \emph{pradius} respectively for $A\in\C^{n\times n}$:
\begin{equation}\label{pnumrr}
\begin{aligned}
&\W_{\pi}(A)=\{\x^\top A \x: \x\in \C^n, \|\x\|=1\}, \\
&r_{\pi}(A)=\max_{\|\x\|=1} |\x^\top A\x|.
\end{aligned}
\end{equation}
The equality \eqref{x*Axiden} yields:
\begin{corollary}\label{pqnrco}  
Assume that $A\in\H^{n\times n}$ is of the form $A=A_1+A_2\bj,  A_1,A_2\in\C^{n\times n}$.   Let $P_1,P_2:\H^{n\times n}\to\C^{n\times n}$ be defined by \eqref{projHmn}.  Then
\begin{equation}\label{pqnrco1}
\rP_1(\W(A))=\W(C(A)), \quad \rP_2(\W(A))=\W_{\pi}(C(-A\bj)).
\end{equation}  
In particular, $\rP_1(\W(A))$ is a convex set.
\end{corollary}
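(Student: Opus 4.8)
The plan is to read both identities straight off the fundamental formula \eqref{x*Axiden}. First I would record the harmless reparametrization
\[
\W(A)=\{\z^\top A\bar\z:\ \z\in\H^n,\ \|\z\|=1\},
\]
already used in the proof of Theorem~\ref{qnumrrthm}: for $\z=\bar\x$ one has $\z^\top A\bar\z=\bar\x^\top A\x=\x^*A\x$, and $\x\mapsto\bar\x$ is a norm-preserving bijection of the unit sphere of $\H^n$. Now write $\z=\bu+\bv\bj$ with $\bu,\bv\in\C^n$, so $\|\z\|^2=\|\bu\|^2+\|\bv\|^2$, and recall that a quaternion $w$ decomposes as $w=\rP_1(w)+\rP_2(w)\bj$ with $\rP_1(w),\rP_2(w)\in\C$. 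By \eqref{x*Axiden},
\[
\rP_1(\z^\top A\bar\z)=[\bu^\top\bv^\top]\,C(A)\begin{bmatrix}\bar\bu\\ \bar\bv\end{bmatrix},
\qquad
\rP_2(\z^\top A\bar\z)=[\bu^\top\bv^\top]\,C(-A\bj)\begin{bmatrix}\bu\\ \bv\end{bmatrix}.
\]

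Next I would move each of these quadratic expressions onto the unit sphere of $\C^{2n}$. For $\rP_1$, put $\w=\begin{bmatrix}\bar\bu\\ \bar\bv\end{bmatrix}\in\C^{2n}$; then $[\bu^\top\bv^\top]=\w^*$ and $\|\w\|=\|\z\|$, so $\rP_1(\z^\top A\bar\z)=\w^*C(A)\w$. The assignment $\bu+\bv\bj\mapsto\w$ is a norm-preserving bijection $\H^n\to\C^{2n}$ (complex conjugation being a bijection of $\C^n$), so as $\z$ runs over the unit sphere of $\H^n$, $\w$ runs over the entire unit sphere of $\C^{2n}$; hence $\rP_1(\W(A))=\{\w^*C(A)\w:\|\w\|=1\}=\W(C(A))$. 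For $\rP_2$, put instead $\y=\begin{bmatrix}\bu\\ \bv\end{bmatrix}\in\C^{2n}$; then $[\bu^\top\bv^\top]=\y^\top$, $\|\y\|=\|\z\|$, and $\rP_2(\z^\top A\bar\z)=\y^\top C(-A\bj)\y$. Since $\z\mapsto\y$ is again a norm-preserving bijection onto $\C^{2n}$, we get $\rP_2(\W(A))=\{\y^\top C(-A\bj)\y:\|\y\|=1\}=\W_{\pi}(C(-A\bj))$. Finally, as $C(A)\in\C^{2n\times2n}$, the Hausdorff-T\"oplitz theorem gives that $\W(C(A))=\rP_1(\W(A))$ is convex.

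I do not anticipate a real obstacle: the argument is bookkeeping on top of \eqref{x*Axiden}. The only two points that need care are (i) the switch from $\x^*A\x$ to $\z^\top A\bar\z$ by conjugating the argument, which is precisely what lets the first block of \eqref{x*Axiden} be read as the \emph{starred} quadratic form $\w^*C(A)\w$ defining $\W(C(A))$, rather than a plain bilinear one; and (ii) verifying that the two different repackagings of $\z=\bu+\bv\bj$, namely $\w=(\bar\bu^\top,\bar\bv^\top)^\top$ and $\y=(\bu^\top,\bv^\top)^\top$, each sweep out the full unit sphere of $\C^{2n}$, which is immediate. Once these conventions are aligned, the two displayed equalities drop out, and the ``in particular'' is just Hausdorff-T\"oplitz applied to the complex $2n\times2n$ matrix $C(A)$.
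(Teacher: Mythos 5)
Your proposal is correct and is essentially the paper's own argument spelled out: the paper simply states that the corollary follows from \eqref{x*Axiden}, and you have carried out the bookkeeping—observing that $\W(A)=\{\z^\top A\bar\z:\|\z\|=1\}$ via $\z=\bar\x$, reading off $\rP_1$ and $\rP_2$ from the two blocks of \eqref{x*Axiden}, and noting that $\z\mapsto(\bar\bu^\top,\bar\bv^\top)^\top$ and $\z\mapsto(\bu^\top,\bv^\top)^\top$ are each norm-preserving bijections $\H^n\to\C^{2n}$. The ``in particular'' by Hausdorff--T\"oplitz is exactly the intended conclusion.
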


Kippenhahn \cite{Kip51} introduced the notion of the bild: $\rB(A)=\W(A)\cap \C$ for $A\in\mathbb{H}^{n\times n}$.  Clearly $\rB(A)\subseteq \rP_1(\W(A))$.  It is known that co$(\rB(A)) = \W(C(A))$ \cite[Theorem 2]{So19}.   For additional results on $\rB(A)$ and its intersection the upper half plane $\rB^+(A)$ see \cite{STZ94,Zha96,Tho97,Kum19}.

We show that that the properties of $W_{\pi}(A)$ are similar to the properties of $\W(A)$ for quaternionic matrices.
Clearly,  $\x^\top A\x=\x^\top A^\top\x$.  Hence
\begin{equation}\label{prrid}
\W_{\pi}(A)=\W_{\pi}(\frac{1}{2}(A+A^\top)), \quad r_{\pi}(A)=r_{\pi}(\frac{1}{2}(A+A^\top)).
\end{equation}
\begin{lemma}\label{prrlem} Let $A\in\C^{n\times n}$.  Then
\begin{enumerate}[(a)]
\item $\W_{\pi}(A)$ is compact, and may not be convex.
\item $\W_{\pi}(A)=\{0\}$ if and only if $A\in\rA_n(\C)$.
\item $r_{\pi}(A)$ is a norm on $\rS_n(\C)$.
\end{enumerate}
\end{lemma}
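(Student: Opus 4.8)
The plan is to read everything off the definition \eqref{pnumrr}, using the reduction \eqref{prrid} to the symmetric part $\frac12(A+A^\top)$ wherever antisymmetry matters, and to invoke part~(b) to get the definiteness needed in part~(c). For (a), compactness is immediate: $\W_{\pi}(A)$ is the continuous image under $\x\mapsto\x^\top A\x$ of the unit sphere $\{\x\in\C^n:\|\x\|=1\}$, which is compact in $\C^n\cong\R^{2n}$. For non-convexity it suffices to exhibit a single example, and I would take $n=1$, $A=[1]$: then $\W_{\pi}(A)=\{x^2:x\in\C,\ |x|=1\}$ is the unit circle, which is not convex. (One may add a remark that, by \eqref{prrid}, the Autonne--Takagi factorization $B=U\Sigma U^\top$ of $B=\frac12(A+A^\top)$ with $U$ unitary and $\Sigma\ge 0$ diagonal, together with the identity $\|U^\top\x\|=\|\x\|$, reduces $\W_{\pi}(A)$ to $\W_{\pi}(\Sigma)$, which turns out to be a closed disk whenever $n\ge 2$; I will likely include only the example, since that is all the lemma asserts.)

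For (b): if $A\in\rA_n(\C)$ then $A+A^\top=0$, so \eqref{prrid} gives $\W_{\pi}(A)=\W_{\pi}(0)=\{0\}$. Conversely, assume $\W_{\pi}(A)=\{0\}$ and set $B=\frac12(A+A^\top)\in\rS_n(\C)$; by \eqref{prrid}, $\x^\top B\x=0$ for every unit vector $\x$, hence, after rescaling an arbitrary nonzero vector to unit length, $\x^\top B\x=0$ for every $\x\in\C^n$. Polarizing and using $B=B^\top$,
\[
0=(\be_i+\be_j)^\top B(\be_i+\be_j)=b_{ii}+2b_{ij}+b_{jj}=2b_{ij}\qquad(i,j\in[n]),
\]
since $b_{ii}=\be_i^\top B\be_i=0$. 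Thus $B=0$, i.e.\ $A^\top=-A$, so $A\in\rA_n(\C)$.

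For (c), I would check the norm axioms on the complex vector space $\rS_n(\C)$. Non-negativity is clear; $r_{\pi}(\lambda A)=|\lambda|\,r_{\pi}(A)$ for $\lambda\in\C$ follows from $\x^\top(\lambda A)\x=\lambda\,\x^\top A\x$; and the triangle inequality follows from $|\x^\top(A+B)\x|\le|\x^\top A\x|+|\x^\top B\x|\le r_{\pi}(A)+r_{\pi}(B)$ for each unit $\x$, then maximizing the left side. Definiteness is where part~(b) enters: $r_{\pi}(A)=0\iff\W_{\pi}(A)=\{0\}\iff A\in\rA_n(\C)$, and since $\rS_n(\C)\cap\rA_n(\C)=\{0\}$, on $\rS_n(\C)$ this reads $r_{\pi}(A)=0\iff A=0$. (On all of $\C^{n\times n}$ the quantity $r_{\pi}$ is only a seminorm, vanishing precisely on $\rA_n(\C)$, which is exactly why the statement is restricted to $\rS_n(\C)$.) Nothing here is deep; the one step that requires an actual argument rather than bookkeeping is the converse of (b), and even there the only subtlety is to promote "$\x^\top B\x=0$ on the unit sphere" to "$\x^\top B\x=0$ for all $\x$" before applying polarization.
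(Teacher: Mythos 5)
Your proof is correct, and parts (a) and (c) follow essentially the same route as the paper: compactness as the continuous image of the unit sphere, the $1\times 1$ example $\W_\pi([a])=\{z:|z|=|a|\}$ for non-convexity, and for (c) the evident absolute homogeneity and triangle inequality together with definiteness on $\rS_n(\C)$ supplied by (b). Where you genuinely diverge from the paper is the converse direction of (b). The paper splits $S=\tfrac12(A+A^\top)$ as $S_1+S_2\bi$ with $S_1,S_2\in\rS_n(\R)$, writes $\x=\bu+\bi\bv$ with $\bu,\bv\in\R^n$, and derives an explicit identity expressing $\x^\top S\x$ as a pair of real quadratic forms in $(\bu^\top,\bv^\top)^\top$ with real symmetric $(2n)\times(2n)$ matrices; vanishing of $\W_\pi(S)$ then forces both of those matrices, hence $S$, to be zero. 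You instead polarize the complex symmetric bilinear form $(\x,\y)\mapsto\x^\top S\y$ directly over $\C$, evaluating on standard basis vectors to kill each entry $b_{ij}$. Your argument is shorter and more elementary; the paper's heavier real decomposition earns its keep because the very identity it sets up in this proof is reused immediately afterward in Theorem \ref{prangesup} to obtain the supporting-line and $\lambda_{\max}$ characterizations of $\mathrm{co}(\W_\pi(A))$ and $r_\pi(A)$, which a bare polarization argument would not prepare. One small remark: in your homogeneity step, $r_\pi(\lambda A)=|\lambda|\,r_\pi(A)$ follows because $\x^\top(\lambda A)\x=\lambda\,\x^\top A\x$ and you take the modulus, so this is fine, but the paper's statement that $r_\pi$ is a norm is implicitly as an $\R$-norm (or a complex norm, which is stronger) on the real or complex vector space $\rS_n(\C)$; your check actually gives the stronger complex homogeneity.
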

\begin{proof}
(a) Clearly,  $\W_{\pi}(A)$ is compact.
Assume that $n=1$.  Then $A=[a]$, and $\W_{\pi}(A)=\{z\in \C, |z|=|a|\}$.  Hence, $\W_{\pi}(A)$ is not convex if $|a|>0$.

\noindent
(b) Recall that $A$ has a unique decomposition as $S+T$, where $S\in\rS_n(\C), T\in \rA_n(C)$.  In view of \eqref{prrid} we deduce that $\W_{\pi}(T)=\{0\}$.  It is left to show that $\W_{\pi}(S)=\{0\}$ if and only if $S=0$.   Suppose that $\W_{\pi}(S)=\{0\}$.
Assume that $S=S_1+S_2\bi$ and $\x=\bu+\bi\bv$, where ,  where  $S_1,S_2\in\rS_n(\R)$ and $\bu,\bv\in \R^n, \bu^\top\bu+\bv^\top\bv=1$.  Then
\begin{equation}\label{Siden}
\begin{aligned}
&\x^\top S\x=\big(\bu^\top S_1\bu -\bv^\top S_1\bv-\bu^\top S_2\bv-\bv^\top S_2\bu\big)+\\
&\bi\big(\bu^\top S_2 \bu-\bv^\top S_2\bv+\bu^\top S_1\bv +\bv^\top S_1 \bu\big)=\\
&[\bu^\top \bv^\top]\begin{bmatrix}S_1&-S_2\\-S_2&-S_1\end{bmatrix}\begin{bmatrix} \bu\\ \bv\end{bmatrix} +\bi [\bu^\top \bv^\top]\begin{bmatrix}S_2&S_1\\S_1&-S_2\end{bmatrix}\begin{bmatrix} \bu\\ \bv\end{bmatrix}.
\end{aligned}
\end{equation}
Observe that the two $(2n)\times(2n)$ matrices appearing in the last row of the above idenitity are real symmetric.  The assumption that $\W_{\pi}(A)=\{0\}$ means that the above two real symmetric  matrices of order $2n$ are zero.  Hence $S=0$. 

\noindent
(c)  Clearly, 
\begin{equation*}
r_{\pi}(zA)=|z|r_{\pi}(A), \, r_{\pi}(A+B)\le r_{\pi}(A)+r_{\pi}(B) \textrm{ for } z\in\C. A,B\in \C^{n\times n}.
\end{equation*}
Hence $r_{\pi}(\cdot)$ is a norm on $\rS_n(\C)$ if and only if $r_{\pi}(S)=0\iff S=0$ for $S\in\rS_n(\C)$.  This is shown in (b).
\end{proof}
\begin{theorem}\label{prangesup}  Let $A\in \C^{n\times n}$, and set $S=\frac{1}{2}(A+A^\top)=S_1+S_2\bi$, where $S_1,S_2\in \rS_n(\R)$.  Denote
\begin{equation*}
\hat S_1=\begin{bmatrix}S_1&-S_2\\-S_2&-S_1\end{bmatrix},  \hat S_2=\begin{bmatrix}S_2&S_1\\S_1&-S_2\end{bmatrix}\in\rS_{2n}(\R).
\end{equation*}
Then
\begin{enumerate}[(a)]
\item The set $\rm{co}(\W_{\pi}(A))$ is a compact convex set in $\C$.  The supporting lines of $\rm{co}(\W_{\pi}(A)$  of the form $\Re e^{-\theta\bi}z=Const$ are:
\begin{equation*}
\Re e^{-\theta\bi}z=\lambda_{\min}(\cos\theta \hat S_1+\sin\theta \hat S_2), \quad 
\Re e^{-\theta\bi}z=\lambda_{\max}(\cos\theta \hat S_1+\sin\theta \hat S_2).
\end{equation*}
That is, every $z=x+\bi y\in \rm{co}(\W_{\pi}(A))$ satisfies the sharp inequalities
\begin{equation}\label{sprangein}
\lambda_{\min}(\cos\theta \hat S_1+\sin\theta \hat S_2)\le \cos\theta x+\sin\theta y\le \lambda_{\max}(\cos\theta \hat S_1+\sin\theta \hat S_2).
\end{equation}
\item The pradius of $A$ is given by 
\begin{equation}\label{pradform}
\begin{aligned}
&r_{\pi}(A)=\max\{\lambda_{\max}(t_1 \hat S_1+t_2\hat S_2): \bt=(t_1,t_2)^\top\in\R^2, \|\bt\|\le1\}=\\
&\max\{\lambda_{\max}(t_1 \hat S_1+t_2\hat S_2): \bt=(t_1,t_2)^\top\in\R^2, \|\bt\|=1\}.
\end{aligned}
\end{equation}
\end{enumerate}
\end{theorem}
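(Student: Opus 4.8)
The plan is to reduce the whole statement to the identity \eqref{Siden}, which already expresses $\x^{\top}S\x$ in terms of the two block matrices occurring there, and then to extract the geometry of $\mathrm{co}(\W_{\pi}(A))$ from the Rayleigh quotient characterization of the extreme eigenvalues. First I would replace $A$ by its symmetric part $S=\frac12(A+A^{\top})=S_{1}+S_{2}\bi$; this is legitimate by \eqref{prrid}, so $\W_{\pi}(A)=\W_{\pi}(S)$ and $r_{\pi}(A)=r_{\pi}(S)$. Writing $\x=\bu+\bi\bv$ with $\bu,\bv\in\R^{n}$ and setting $\w=(\bu^{\top},\bv^{\top})^{\top}\in\R^{2n}$, the two block matrices in \eqref{Siden} are exactly $\hat S_{1}$ and $\hat S_{2}$, so \eqref{Siden} reads
\begin{equation*}
\x^{\top}S\x=\w^{\top}\hat S_{1}\w+\bi\,\w^{\top}\hat S_{2}\w ,
\end{equation*}
and moreover $\|\x\|=1\iff\|\w\|=1$. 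Since $\W_{\pi}(S)$ is the image of the unit sphere of $\C^{n}\cong\R^{2n}$ under a continuous map it is compact, and therefore $\mathrm{co}(\W_{\pi}(A))$ is a compact convex subset of $\C\cong\R^{2}$; this settles the first sentence of (a).

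For the supporting lines I would compute the support function of $\mathrm{co}(\W_{\pi}(A))$, which equals that of $\W_{\pi}(A)$. Fixing $\theta$ and using the displayed identity together with the maximum/minimum characterization of the Rayleigh quotient,
\begin{equation*}
\sup_{z\in\W_{\pi}(A)}\Re e^{-\theta\bi}z=\sup_{\|\w\|=1}\w^{\top}(\cos\theta\,\hat S_{1}+\sin\theta\,\hat S_{2})\w=\lambda_{\max}(\cos\theta\,\hat S_{1}+\sin\theta\,\hat S_{2}),
\end{equation*}
and the infimum equals $\lambda_{\min}(\cos\theta\,\hat S_{1}+\sin\theta\,\hat S_{2})$. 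This yields the sharp inequalities \eqref{sprangein} for every $z\in\mathrm{co}(\W_{\pi}(A))$, and, since a compact convex set equals the intersection of the closed half-planes supporting it, it identifies the two lines $\Re e^{-\theta\bi}z=Const$ as the supporting ones. The bounds are attained: $\hat S_{1},\hat S_{2}$ are real symmetric, so $\cos\theta\,\hat S_{1}+\sin\theta\,\hat S_{2}$ has a real unit eigenvector $\w$ for $\lambda_{\max}(\cos\theta\,\hat S_{1}+\sin\theta\,\hat S_{2})$, and then $\x=\bu+\bi\bv$ with $(\bu^{\top},\bv^{\top})^{\top}=\w$ is a unit vector of $\C^{n}$ with $\Re e^{-\theta\bi}\x^{\top}S\x=\lambda_{\max}(\cos\theta\,\hat S_{1}+\sin\theta\,\hat S_{2})$; the case of $\lambda_{\min}$ is identical.

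For (b) I would start from $r_{\pi}(A)=\max\{|z|:z\in\W_{\pi}(A)\}$ and use $|z|=\max_{\theta}\Re e^{-\theta\bi}z$ to interchange the maxima:
\begin{equation*}
r_{\pi}(A)=\max_{\theta}\sup_{z\in\W_{\pi}(A)}\Re e^{-\theta\bi}z=\max_{\theta}\lambda_{\max}(\cos\theta\,\hat S_{1}+\sin\theta\,\hat S_{2}),
\end{equation*}
which is the first expression in \eqref{pradform} after putting $\bt=(\cos\theta,\sin\theta)^{\top}$. To pass to the closed unit ball, note that $\tr\hat S_{1}=\tr\hat S_{2}=0$, so $t_{1}\hat S_{1}+t_{2}\hat S_{2}$ is traceless and hence has $\lambda_{\max}\ge0$; combined with the positive homogeneity of $\lambda_{\max}$ in $\bt$, scaling a nonzero $\bt$ in the closed unit ball out to the unit circle does not decrease $\lambda_{\max}(t_{1}\hat S_{1}+t_{2}\hat S_{2})$, so the two maxima in \eqref{pradform} coincide. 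I expect no genuine obstacle here: the only steps needing care are the bookkeeping that the matrices in \eqref{Siden} really are $\hat S_{1}$ and $\hat S_{2}$, and the appeal to the standard fact that a planar compact convex body is recovered from its support function, both of which are routine.
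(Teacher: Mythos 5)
Your proposal is correct and follows essentially the same route as the paper: reduce to the symmetric part $S$ via \eqref{prrid}, read off $\hat S_1,\hat S_2$ from the identity \eqref{Siden}, and use the Rayleigh-quotient characterization of the extreme eigenvalues to get the support lines in (a) and, after interchanging the two maxima, the first equality in \eqref{pradform}. The only cosmetic difference is the final step from the unit circle to the closed unit ball: you use that $\hat S_1,\hat S_2$ are traceless so $\lambda_{\max}(t_1\hat S_1+t_2\hat S_2)\ge 0$ together with positive homogeneity, while the paper invokes convexity of $\bt\mapsto\lambda_{\max}(t_1\hat S_1+t_2\hat S_2)$ to conclude that the maximum over the disk is attained on the boundary; both are correct one-line justifications of the same fact.
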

\begin{proof}
(a)  The first equality in \eqref{prrid} yields that $\W_{\pi}(A)=\W_{\pi}(S)$.  Thus, without loss of generality we can assume that $A=S$.
As $\W_{\pi}(S)$ is a compact set it follows that co$(\W_{\pi}(S))$ is a compact convex set.    Let $z\in \W_{\pi}(S)$.  Then $z=\x^\top S\x$ for some $\x\in \C^n, \|\x\|=1$.  As in the proof of Lemma \ref{prrlem} let $\x=\bu+\bi\bv, \bu^\top\bu +\bv^\top\bv=1$.
The identity \eqref{Siden} yields that
\begin{equation*}
z=[\bu^\top\bv^\top] \hat S_1\begin{bmatrix}\bu\\\bv\end{bmatrix} +\bi [\bu^\top\bv^\top] \hat S_2\begin{bmatrix}\bu\\\bv\end{bmatrix} 
\end{equation*}
Hence,
\begin{equation*}
\Re e^{-\theta\bi }z=[\bu^\top\bv^\top] \big( \cos\theta \hat S_1+\sin\theta\hat S_2\big)\begin{bmatrix}\bu\\\bv\end{bmatrix}
\end{equation*}
Take the minimum and the maximum of the above expression on $(\bu^\top,\bv^\top)^\top$ with norm one to deduce the sharp inequalities \eqref{sprangein}.

\noindent (c)
Clearly,  for each $(\bu^\top,\bv^\top)^\top$ with norm one has the inequality 
\begin{equation*}
\sqrt{\big([\bu^\top\bv^\top] \hat S_1\begin{bmatrix}\bu\\\bv\end{bmatrix}\big)^2+\big([\bu^\top\bv^\top] \hat S_1\begin{bmatrix}\bu\\\bv\end{bmatrix}\big)^2}\le r_{\pi}(S).
\end{equation*}
Hence $\lambda_{\max}(\cos\theta \hat S_1+\sin\theta \hat S_2)\le r_{\pi}(S)$.  
Observe next that that there exists $\theta\in[0,2\pi)$ and $(\bu^\top,\bv^\top)^\top$ of length one such that 
\begin{equation*}
\Re e^{-\theta\bi }\x^\top S\x=[\bu^\top\bv^\top] \big( \cos\theta \hat S_1+\sin\theta\hat S_2\big)\begin{bmatrix}\bu\\\bv\end{bmatrix}=r_{\pi}(B).
\end{equation*}
Hence, 
\begin{equation*}
r_{\pi}(B)\le \lambda_{\max}( \cos\theta \hat S_1+\sin\theta\hat S_2).
\end{equation*} 
This shows that
\begin{equation}\label{pradform1}
r_{\pi}(B)=\max_{\theta\in[0,2\pi)} \lambda_{\max}( \cos\theta \hat S_1+\sin\theta\hat S_2).
\end{equation} 
As $\lambda_{\max}(t_1\hat S_1+t_22\hat S_2)$ is a convex function of $\bt=(t_1,t_2)^\top\in\R^2$ if follows that the maximum of $\lambda_{\max}(t_1\hat S_1+t_22\hat S_2)$ the unit disk $\|\bt\|\le 1$ achived on the boundary.
Hence, \eqref{pradform1} is equivalent to \eqref{pradform}.
\end{proof}

We now recall \cite[Lemma 5.1]{FL23}:  
\begin{lemma}\label{knwnr(C)}
Let $C=E+F\bi $, where $E,F\in \rH_n$.  Then
\begin{equation*}
r(C)=\max_{\theta\in[0,2\pi)} \lambda_{\max}(\cos\theta E+\sin\theta F).
\end{equation*}
\end{lemma}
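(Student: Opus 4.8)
The plan is to unwind the definition of $r(\cdot)$ and replace the absolute value by a one-parameter family of real linear functionals, each of which is governed by a Hermitian Rayleigh quotient. First I would start from $r(C)=\max\{|\x^*C\x|:\x\in\C^n,\ \|\x\|=1\}$ and use the elementary identity $|z|=\max_{\theta\in[0,2\pi)}\Re(e^{-\theta\bi}z)$ for $z\in\C$, obtaining
\[
r(C)=\max_{\|\x\|=1}\ \max_{\theta\in[0,2\pi]}\Re\big(e^{-\theta\bi}\,\x^*C\x\big).
\]
Since $(\x,\theta)\mapsto\Re(e^{-\theta\bi}\x^*C\x)$ is continuous on the compact set $\{\x\in\C^n:\|\x\|=1\}\times[0,2\pi]$ (the integrand being $2\pi$-periodic in $\theta$), the order of the two maxima may be interchanged, so $r(C)=\max_\theta\ \max_{\|\x\|=1}\Re(e^{-\theta\bi}\x^*C\x)$.

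Next I would note that, for each fixed $\theta$, the inner maximand is a Hermitian quadratic form. Using $\overline{\x^*C\x}=\x^*C^*\x$,
\[
\Re\big(e^{-\theta\bi}\,\x^*C\x\big)=\frac{1}{2}\big(e^{-\theta\bi}\x^*C\x+e^{\theta\bi}\x^*C^*\x\big)=\x^*H_\theta\x,\qquad H_\theta:=\frac{1}{2}\big(e^{-\theta\bi}C+e^{\theta\bi}C^*\big),
\]
and $H_\theta^*=H_\theta$. The Rayleigh quotient characterization of the largest eigenvalue of a selfadjoint matrix then gives $\max_{\|\x\|=1}\x^*H_\theta\x=\lambda_{\max}(H_\theta)$, hence $r(C)=\max_\theta\lambda_{\max}(H_\theta)$.

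Finally I would substitute $C=E+F\bi$ with $E,F\in\rH_n$, so that $C^*=E-F\bi$, and simplify using $e^{-\theta\bi}+e^{\theta\bi}=2\cos\theta$ and $\bi(e^{-\theta\bi}-e^{\theta\bi})=2\sin\theta$; this yields $H_\theta=\cos\theta\,E+\sin\theta\,F$ and therefore the claimed formula. I do not expect any genuine obstacle here: the only step deserving a sentence of justification is the interchange of the two maxima, which is immediate from compactness and continuity — or, if one prefers to avoid it entirely, from the two obvious inequalities, namely $|\x^*C\x|=\max_\theta\Re(e^{-\theta\bi}\x^*C\x)\le\max_\theta\lambda_{\max}(H_\theta)$ for every unit $\x$ (giving $\le$), and $\lambda_{\max}(H_{\theta^\star})=\y^*H_{\theta^\star}\y=\Re(e^{-\theta^\star\bi}\y^*C\y)\le|\y^*C\y|\le r(C)$ for $\theta^\star$ a maximizer and $\y$ a corresponding top eigenvector (giving $\ge$). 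This argument is the $\C$-analogue of the proof of Theorem \ref{prangesup}(b), and the statement is exactly \cite[Lemma 5.1]{FL23}.
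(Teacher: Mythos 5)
Your argument is correct. Note that the paper itself gives no proof here: it simply \emph{recalls} the statement as \cite[Lemma 5.1]{FL23}, so there is no ``paper's proof'' to compare against at this point in the text. Your computation is nevertheless exactly the standard route — reduce $|z|$ to $\max_\theta \Re(e^{-\theta\bi}z)$, swap the two maxima, identify the Hermitian matrix $H_\theta=\tfrac12(e^{-\theta\bi}C+e^{\theta\bi}C^*)=\cos\theta\,E+\sin\theta\,F$, and apply the Rayleigh--Ritz characterization of $\lambda_{\max}$ — and it is precisely the $\C$-analogue of the argument the author actually writes out for Theorem~\ref{prangesup}(b), as you yourself observe. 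Your optional two-inequality bypass of the max-interchange is also fine, so there is no gap; the only thing to be aware of is that you have supplied a proof where the paper intentionally defers to the cited reference.
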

\begin{corollary}\label{corthmprangesup} Let the assumptions of Theorem \ref{prangesup} hold.   Set $C=\hat S_1+\hat S_2\bi\in \C^{(2n)\times (2n)}$.  Then 
\begin{equation*}
r_{\pi}(A)=r_{\pi}(S)=r(C).
\end{equation*}
\end{corollary}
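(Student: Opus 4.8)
The goal is to identify the pradius $r_\pi(A)=r_\pi(S)$ with the ordinary numerical radius $r(C)$ of the complex matrix $C=\hat S_1+\hat S_2\bi$. The strategy is to observe that the two optimization problems \eqref{pradform1} and the formula in Lemma~\ref{knwnr(C)} are literally the same expression, once we verify that $C$ is of the required form, i.e.\ that $\hat S_1,\hat S_2\in\rH_{2n}$. First I would note that by Theorem~\ref{prangesup}(b), combined with \eqref{pradform1}, one has
\begin{equation*}
r_\pi(A)=r_\pi(S)=\max_{\theta\in[0,2\pi)}\lambda_{\max}(\cos\theta\,\hat S_1+\sin\theta\,\hat S_2).
\end{equation*}
Then I would apply Lemma~\ref{knwnr(C)} with $E=\hat S_1$ and $F=\hat S_2$: the lemma requires $E,F\in\rH_{2n}$, and indeed $\hat S_1$ and $\hat S_2$ were defined in Theorem~\ref{prangesup} as \emph{real symmetric} matrices of order $2n$, hence in particular Hermitian. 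Therefore Lemma~\ref{knwnr(C)} gives
\begin{equation*}
r(C)=\max_{\theta\in[0,2\pi)}\lambda_{\max}(\cos\theta\,\hat S_1+\sin\theta\,\hat S_2),
\end{equation*}
and the two right-hand sides coincide, yielding $r_\pi(A)=r_\pi(S)=r(C)$.

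The only point that needs a word of justification is the first equality $r_\pi(A)=r_\pi(S)$, but this is exactly the content of the second identity in \eqref{prrid}, since $S=\frac12(A+A^\top)$. Everything else is a direct substitution: the expression \eqref{pradform1} derived in the proof of Theorem~\ref{prangesup} and the expression supplied by Lemma~\ref{knwnr(C)} are syntactically identical, so no further computation is involved.

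I do not anticipate a genuine obstacle here; this corollary is a bookkeeping consequence of two already-established facts. If anything, the only thing to be careful about is making sure the matrices $\hat S_1,\hat S_2$ as defined in the statement of Theorem~\ref{prangesup} really are symmetric (they are, by inspection of their block form, each block being a symmetric matrix $S_1$ or $S_2$ and the off-diagonal blocks matching), so that the hypothesis of Lemma~\ref{knwnr(C)} — which is stated for $C=E+F\bi$ with $E,F$ Hermitian — is satisfied with $n$ replaced by $2n$.
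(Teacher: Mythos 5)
Your proof is correct and matches the paper's intent exactly: the paper gives no explicit argument because the corollary is meant to follow immediately by combining \eqref{pradform1} from Theorem~\ref{prangesup}, the identity \eqref{prrid}, and Lemma~\ref{knwnr(C)} applied with $E=\hat S_1$, $F=\hat S_2$ (which are real symmetric, hence Hermitian). Nothing further is needed.
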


It is straightforward to show that $\W_{\pi}(S)\subset \W(C)$.  Is it true that co$(\W_{\pi}(S))=\W(C)$?  For $n=1$ a straighforward calculation shows that one has equality.

The well known result of Ando \cite{And73} implies that $r(C)$ is a solution of an SDP problem.  Assume that $C$ has Gaussian rational entries,  and  $\varepsilon >0$ is rational.    Theorem 3.3 in \cite{FL23} shows the computation of $r(C)$ within precision $\varepsilon$  is polynomially computable in data of the entries of $C$ and $|\log\varepsilon|$.  Hence, same results apply to $A$ with rational Gaussian entries.
\section*{Acknowledgment}
I thank the referee for the remarks and comments.
The author is partially supported by the Collaboration Grant for Mathematicians of the Simons Foundation.
\section*{Dedication}
This paper is dedicated to my friend Avi Berman.

\end{document}